\newtheorem{theorem}{Theorem}[section]
\newtheorem{lemma}[theorem]{Lemma}
\newtheorem{proposition}[theorem]{Proposition}
\newtheorem{corollary}[theorem]{Corollary}
\theoremstyle{definition}
\newtheorem{definition}[theorem]{Definition}
\theoremstyle{remark}
\newtheorem{remark}[theorem]{Remark}
\numberwithin{equation}{section}
\newcommand{\R}{\mathbb{R}}
\newcommand{\ip}[2]{\left\langle #1, #2\right\rangle}
\newcommand{\pf}[2]{\frac{\partial #1}{\partial #2}}
\renewcommand{\div}{\textup{div}}
\newcommand{\abs}[1]{\left|#1\right|}
\numberwithin{equation}{section}
\newcommand{\sL}{{\mathcal L}}
\begin{document}
\title[]{Uniqueness Theorems of Self-Conformal Solutions To Inverse Curvature Flows}
\author[Nicholas Chin]{Nicholas Cheng-Hoong Chin}
\author[Frederick Fong]{Frederick Tsz-Ho Fong}
\author[Jingbo Wan]{Jingbo Wan}
\address{Department of Mathematics, Hong Kong University of Science and Technology, Clear Water Bay, Kowloon, Hong Kong}

\email{nchchin@connect.ust.hk}
\email{frederick.fong@ust.hk}
\email{jwanac@connect.ust.hk}

\date{Submitted on 18 December, 2019, Revised on 18 April, 2020}

\maketitle
\begin{abstract}
It has been known in \cite{DLW, KLP, CCF} that round spheres are the only closed homothetic self-similar solutions to the inverse mean curvature flow and parabolic curvature flows by degree $-1$ homogeneous functions of principal curvatures in the Euclidean space.

In this article, we prove that the round sphere is rigid in stronger sense: under some natural conditions such as star-shapedness, round spheres are the only closed solutions to the above-mentioned flows which evolve by diffeomorphisms generated by conformal Killing fields.
\end{abstract}

\section{Introduction}
In this article, we study uniqueness problems of \emph{self-conformal} solutions to inverse curvature flows including the inverse mean curvature flow (IMCF). They are solutions which evolve by diffeomorphisms generated by conformal Killing fields. The flows that we consider are parabolic flows on Euclidean hypersurfaces $\Sigma^{n\geq2} \subset \R^{n+1}$ of the form:
\begin{equation}
\label{eq:FlowInverse}
\left(\pf{F}{t}\right)^\perp = -\frac{1}{\rho}\nu.	
\end{equation}
Here $\nu$ is the evolving Gauss map, and $\rho(\lambda_1,\cdots,\lambda_n) : \Gamma \subset \R^n \to \R_+$ is a positive, homogeneous of degree $1$, symmetric and $C^2$ function of principal curvatures defined on a certain open cone\footnote{Here a cone $\Gamma$ means subset of $\R^n$ such that $x \in \Gamma$ implies $tx \in \Gamma$ for any $t \geq 0$.} $\Gamma \subset \R^n$ such that $\pf{\rho}{\lambda_i} > 0$ on $\Gamma$. For instance, when $\rho = H$ (the mean curvature) and take $\Gamma = \R^m$, the flow \eqref{eq:FlowInverse} is the well-known inverse mean curvature flow (IMCF) which is the major tool of proving many geometric inequalities such as the Riemannian Penrose inequality by Huisken-Ilmanen \cite{HI01}, and Minkowski's and Alexandrov-Fenchel's inequalities by Guan-Li \cite{GL}, Brendle-Hung-Wang \cite{BHW}, Wei \cite{We}, and many others.

We call a solution to the flow to be \emph{self-conformal} if it evolves by diffeomorphisms generated by a conformal Killing field, which is a vector field $V$ on $\R^{n+1}$ that satisfies $\mathcal{L}_V \delta = \frac{2\div(V)}{n+1}\delta$ where $\delta$ is the standard Euclidean metric on $\R^{n+1}$. Self-conformal solutions include homothetic self-similar solutions and translating solitons as special cases. In particular, when $V = cX$ where $c$ is a constant and $X$ is the position vector field, then $\mathcal{L}_V\delta = c\delta$, and the diffeomorphisms generated by $V$ are rescaling maps. Solutions that evolve by these rescaling diffeomorphisms are called \emph{homothetic self-similar} solutions. When $V$ is a constant vector, the diffeomorphisms generated by $V$ are translations and self-conformal solutions along this $V$ are known as \emph{translating solitons}.

We focus only on compact Euclidean hypersurfaces in this article. For compact homothetic self-similar solutions, it has been proven that round spheres are the only closed homothetic self-similar solutions to \eqref{eq:FlowInverse} by G. Drugan, H. Lee and G. Wheeler \cite{DLW} in the case $\rho = H$ (i.e. IMCF); by K.K. Kwong, H. Lee and J. Pyo \cite{KLP} in the case $\rho = (\sigma_i/\sigma_j)^{1/(i-j)}$ and their positive linear combinations. Here $\sigma_i = \sum_{k_1 < \cdots < k_i} \lambda_{k_1} \cdots \lambda_{k_i}$ is the $i$-th symmetric polynomial of principal curvatures. More generally in \cite{CCF}, A. Chow, K.W. Chow and the second-named author proved round spheres are the only homothetic self-similar solutions to \eqref{eq:FlowInverse} for any general symmetric homogeneous $C^1$ function $\rho$ of degree 1 such that \eqref{eq:FlowInverse} is parabolic. In both \cite{DLW} and \cite{KLP}, the Hsiung-Minkowski's identities \cite{Hs} and their weighted variants \cite{KLP}, which are integral formulae involving $\sigma_k$'s, were used to show that such a homothetic self-similar solution must be umbilic. The general case considered in \cite{CCF} was proved by a different argument, by showing that such a closed homothetic self-similar solution must be rotationally symmetric about any axis through the origin. Consequently, such a solution must be a round sphere. On the other hand, homothetic self-similar solutions which are non-compact or with higher codimensions are much less rigid, as there are multiple examples constructed in \cite{HI97,CL,DLW,CCF,Hui}, some of which even have the same topological type but are geometrically distinct.

The main purpose of this article is to show the round spheres are rigid in stronger sense than in \cite{DLW, KLP, CCF}, that they are the only closed \emph{self-conformal} solutions to the inverse mean curvature flow (and some other non-linear flows by homogenenous speed functions) under some natural assumptions such as star-shapedness. Here is the summary of our main results:

\vskip 0.2cm
\label{main_thm}
\noindent\textbf{Main Theorem}. Suppose $\Sigma^n \subset \R^{n+1}$ is a closed self-conformal solution to the flow $(\partial_t F)^\perp = \varphi \nu$ along a conformal Killing vector field $V$ (see Definition \ref{def:ConformalKilling}). Then, if any of the following conditions below is met, then $\Sigma^n$ must be a round sphere.
\begin{enumerate}[(i)]
	\item $n = 2$, and $\varphi = -\dfrac{1}{H}$; or
	\item $n \geq 3$, $\varphi = -\dfrac{1}{H}$, and $\div(K)$ is constant on $\Sigma$; or
	\item $n \geq 2$, $\Sigma^n$ is star-shaped, and $\varphi = -\dfrac{1}{\rho}$ where $\rho$ is a symmetric homogeneous function of degree $1$ in the class $\mathcal{C}$ defined in p.\pageref{classC}; or
	\item $n \geq 2$, $\varphi = -\frac{\sigma_{k-1}}{\sigma_k}$, and $\Sigma^n$ and $V$ satisfy the condition at some time $t \geq 0$:
	\[\frac{\int_{\Sigma_t} \sigma_k \div(V)\,d\mu}{\int_{\Sigma_t}\sigma_k\,d\mu} = \frac{\int_{\Sigma_t} \sigma_{k-1} \div(V)\,d\mu}{\int_{\Sigma_t}\sigma_{k-1}\,d\mu}.\]
\end{enumerate}

All four cases (i)-(iv) include the inverse mean curvature flow (with $\rho = H$ for (iii), and $k = 1$ for (iv)). For (i) and (ii), the uniqueness result is proved using the Willmore energy which is monotone along IMCF in any dimension $\geq 2$, and is a conformal invariant in dimension $2$. No star-shaped condition is needed in these cases. For (iii), the star-shaped condition enables us to apply the estimates by Gerhardt \cite{G} and Urbas \cite{U} to prove our uniqueness result. For (iv), we consider the monotone quantities considered by P. Guan and J. Li in \cite{GL}, and prove that they are stationary along the flow. Cases that satisfy the assumption in (iv) are discussed in several remarks after Theorem \ref{thm:reflect}. We will prove (i) and (ii) in Theorem \ref{thm:2D}, (iii) in Theorem \ref{thm:StarShaped}, and (iv) in Theorem \ref{thm:reflect}.

\vskip 0.2cm
\noindent\textbf{Acknowledgement.}
The authors would like to thank the referee for his/her valuable and insightful comments on our previous version of the article. We also thank him/her for suggestions of improving the expositions of the article. Nicholas Chin is partially supported by the HKUST postgraduate studentship. Jingbo Wan is partially supported by the HKUST Undergraduate Research Opportunity Project (UROP). The research conducted is partially supported by the second-named author's Hong Kong RGC Early Career Grant \#26301316 and General Research Fund \#16302417.

\section{Preliminaries}

\begin{definition}[Conformal Killing fields]
\label{def:ConformalKilling}
A vector field $V$ on a Riemannian manifold $(M,g)$ is said to be a \emph{conformal Killing field} if there exists a smooth function $\alpha_V : M \to \R$ such that
\begin{equation}
\label{eq:ConformalKilling}
\mathcal{L}_V g = 2\alpha_V g	
\end{equation}
where $\mathcal{L}_V$ denotes the Lie derivative along $V$.
\end{definition}

\begin{remark}
Express $V = V^i \pf{}{x_i}$ in local coordinates, then we have $(\mathcal{L}_V g)_{ij} = \nabla_i V_j + \nabla_j V_i$, and so it is necessary that $\alpha_V = \frac{\div_g(V)}{\dim M}$ for $V$ to be a conformal Killing field.
\end{remark}

Suppose $V$ is a conformal Killing field in $(M,g)$, then by considering the definition of Lie derivatives, we have
\[\frac{d}{dt}\left(\Phi^V_t\right)^*g = \left(\Phi^V_t\right)^*(\mathcal{L}_V g) = \left(\Phi^V_t\right)^*(2\alpha_V g),\]
where $\Phi^V_t : \R^{n+1} \to \R^{n+1}$ is the diffeomorphism family generated by $V$. One can easily solve this ODE and show that $\Phi^V_t$ pulls back $g$ by
\begin{equation}
\label{eq:ConformalFlow}
\left(\Phi^V_t\right)^*g = e^{2\int_0^t \left(\Phi^V_\tau\right)^*\alpha_V d\tau}g.
\end{equation}
Hence, $\Phi^V_t$ is a conformal map on the set of points in $M$ on which $\Phi^V_t$ is well-defined.

On Euclidean spaces $\R^{n+1\geq 3}$, the set of conformal Killing fields is completely known. They are of the form:
\begin{align*}
    V(X)=v+AX+\mu X+2\ip{b}{X}X-|X|^2b
\end{align*}
for some $v,b\in\R^{n+1}$, $\mu\in\R$ and $A\in O(n+1)$. Consequently, $\div(V)$ is an affine linear function on $\R^{n+1}$ (see Proposition \ref{prop:alpha_affine}). Each term in $V$ corresponds to a specific kind of conformal transformations. The constant vector $v$ gives the translation, $X \mapsto AX$ gives a rotations or reflection, $X \to \mu X$ gives a homothetic rescaling, and $X \mapsto 2\ip{b}{X}X-|X|^2b$ corresponds to inversions.

\begin{definition}[Self-Conformal Solutions]
\label{def:SelfConformal}
A complete hypersurface $\Sigma^n \subset \R^{n+1}$ is said to be a \emph{self-conformal solution} to the flow \eqref{eq:FlowInverse} if the flow \eqref{eq:FlowInverse} initiating from $\Sigma$ evolves by
\[\Sigma_t = \Phi^V_t \circ \Sigma\]
for some conformal Killing field $V$ in the Euclidean space $\R^{n+1}$.
\end{definition}

When $V = \mu X$ where $\mu$ is a non-zero constant, and $X$ is the position vector field in $\R^{n+1}$, then the diffeomorphism family generated by $V$ is given by $\Phi^V_t(X) = e^{\mu t}X$. A self-conformal solution with respect to this $V$ is called a \emph{homothetic self-similar solution}. Such a solution is called a \emph{self-expander} if $\mu > 0$; and a \emph{self-shrinker} if $\mu < 0$. When $V$ is a constant vector field, then $\Phi^V_t(p) = p + tV$, so a self-conformal solution with respect to this $V$ is called a \emph{translating soliton}, or simply a \emph{translator}. Therefore, the class of self-conformal solutions include many well-studied special solutions of curvature flows.

As mentioned in the introduction, the following results are known about the uniqueness of closed homothetic self-similar solutions to IMCF and many other flows as well:

\begin{theorem}[Drugan-Lee-Wheeler \cite{DLW}, Kwong-Lee-Pyo \cite{KLP}, Chow-Chow-Fong \cite{CCF}]
The only closed homothetic self-similar solutions to the flow $(\partial_t F)^\perp = -\frac{1}{\rho}\nu$, where $\rho$ satisfies conditions (i)-(iv) stated in p.\pageref{classC} (condition (v) is not needed), are round spheres.
\end{theorem}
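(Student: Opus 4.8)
The plan is to convert the dynamical self-similarity into a static elliptic equation, and then establish rigidity in two stages: by integral identities for the algebraically special speeds, and by a symmetrization argument for a general $\rho$. A homothetic self-similar solution satisfies $\Sigma_t = e^{\mu t}\Sigma$, so $\partial_t F = \mu F$ and the normal-speed condition $(\partial_t F)^\perp = -\frac{1}{\rho}\nu$ collapses to the static equation
\[
\rho\,\ip{X}{\nu} = c
\]
for a constant $c$ (a multiple of $1/\mu$). Since $\Sigma$ is closed and $\rho>0$, at a point farthest from the origin the tangential part of $X$ vanishes, so $\ip{X}{\nu}\neq 0$ there; hence $c\neq 0$ and, as $\ip{X}{\nu}=c/\rho$ is continuous and nowhere zero, it is of one sign. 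Fixing the orientation of $\nu$ so that $\ip{X}{\nu}>0$, I get $c>0$ and that $\Sigma$ is star-shaped about the origin. Note the whole equation is scale invariant ($\rho$ is degree $1$ in the curvatures and $\ip{X}{\nu}$ is degree $1$ under dilation), so every centered sphere solves it with the same $c=\rho(1,\dots,1)$; the goal is to show these are the only closed solutions.

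For $\rho = H$ I would run the Hsiung–Minkowski identities $\int_\Sigma (H_{k-1}-H_k\ip{X}{\nu})\,d\mu = 0$ \cite{Hs}. The equation reads $H_1\ip{X}{\nu}=c/n$; the $k=1$ identity fixes $c=n$ so that $\ip{X}{\nu}=1/H_1$, and substituting this into the $k=2$ identity yields $\int_\Sigma (H_1^2-H_2)/H_1\,d\mu = 0$. Because $H_1>0$ and the Newton–Maclaurin inequality gives $H_1^2\geq H_2$ with equality exactly at umbilic points, the integrand is nonnegative and must vanish identically, forcing $\Sigma$ to be totally umbilic and hence a round sphere. For $\rho=(\sigma_i/\sigma_j)^{1/(i-j)}$ and positive linear combinations the same mechanism works after replacing the classical identities by the weighted Minkowski identities of \cite{KLP} together with the corresponding Newton–Maclaurin/Gårding-cone inequalities.

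For a general symmetric, degree-one, parabolic $\rho$ the algebra of the $\sigma_k$ is unavailable, so instead I would exploit ellipticity directly. Writing $\Sigma$ as a radial graph $X=r(\theta)\theta$ over $S^n$ (possible by star-shapedness), the static equation becomes a fully nonlinear second-order PDE for $r$ that is elliptic precisely because $\pf{\rho}{\lambda_i}>0$, and it is invariant under every reflection across a hyperplane $\Pi$ through the origin. Reflecting $\Sigma$ across such a $\Pi$ produces another closed solution with the same constant $c$; comparing it with $\Sigma$ via an Alexandrov-type moving-plane argument and the strong maximum principle forces the two to coincide. Letting $\Pi$ range over all hyperplanes through the origin shows $\Sigma$ is invariant under the full orthogonal group, hence rotationally symmetric about every axis through the origin, and the only closed star-shaped hypersurface with this property is a centered round sphere (this is the route of \cite{CCF}).

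The hard part is the general-$\rho$ case. A naive pointwise comparison at the maximum and minimum of $r$ (touching $\Sigma$ with concentric spheres) only recovers $c=\rho(1,\dots,1)$ and umbilicity at the innermost and outermost points, and does \emph{not} by itself give $r_{\max}=r_{\min}$; genuine global rigidity therefore requires the elliptic maximum-principle/reflection machinery to propagate the symmetry across all of $\Sigma$. Carrying this out demands (a) uniform strict ellipticity of the graph equation, (b) care with the limited regularity of $\rho$ where principal curvatures coincide—there $\rho$ is only $C^1$ in the $\lambda_i$, though its symmetric dependence keeps the operator well defined—and (c) a valid interior or boundary touching point for the reflection comparison. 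These analytic points, rather than the reduction or the integral identities, are where the substance of the argument lies.
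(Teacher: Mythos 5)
First, a caveat on the comparison itself: the paper does not prove this theorem --- it is quoted from \cite{DLW}, \cite{KLP} and \cite{CCF}, and the introduction only records the two strategies (Hsiung--Minkowski identities for the $\sigma_k$-type speeds; rotational symmetry about every axis through the origin for general $\rho$). Your reduction to the static equation $\rho\ip{X}{\nu}=c$, the star-shapedness observation, and the integral-identity argument for $\rho=H$ (with the weighted Minkowski identities for the $(\sigma_i/\sigma_j)^{1/(i-j)}$ speeds) are correct and faithfully reproduce \cite{DLW} and \cite{KLP}.

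The gap is in the general-$\rho$ case. The step ``reflect $\Sigma$ across a hyperplane $\Pi$ through the origin and compare with $\Sigma$ via an Alexandrov-type moving-plane argument and the strong maximum principle'' does not work as stated, for two reasons. First, the moving-plane method requires sliding $\Pi$ off the origin to locate a first touching position, but $\ip{X}{\nu}$ is not translation-invariant, so the reflection of $\Sigma$ across a displaced plane no longer solves $\rho\ip{X}{\nu}=c$ and the sliding mechanism is unavailable. Second, for a fixed $\Pi$ through the origin, $\Sigma$ and its reflection both contain $\Sigma\cap\Pi$ and generically cross there transversally; with no ordering between the two surfaces the strong maximum principle yields nothing. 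Indeed two distinct solutions of the same equation need not coincide: every origin-centered sphere solves $\rho\ip{X}{\nu}=\rho(1,\dots,1)$, so the equation is far from having a unique solution and some ordered comparison is indispensable.

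The standard repair keeps your touching-sphere computation, which already gives $c=\rho(1,\dots,1)$ from the farthest and nearest points, and then compares $\Sigma$ directly with the circumscribed origin-centered sphere $S_R$, $R=\max_\Sigma|X|$: both solve the \emph{same} radial-graph equation over $\mathbb{S}^n$, the graph functions satisfy $r\le R$ with equality at the farthest point, the operator is elliptic by condition (iv) and $C^1$ in its arguments by condition (i) (which also disposes of your regularity worry --- a symmetric $C^2$ function of the eigenvalues is $C^2$ in the matrix), so the Hopf tangency principle applied to $R-r$ forces $r\equiv R$. Some argument of this ordered-comparison type, rather than reflections, is what actually closes the general case; the route recorded in the introduction for \cite{CCF} (rotational symmetry about every axis through the origin) likewise rests on the $O(n+1)$-invariance of the static equation and not on a moving-plane scheme.
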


Therefore, the main theorem stated in p.\pageref{main_thm} strengthens the above rigidity result by showing that round spheres are unique even in the class of self-conformal solutions of IMCF and some types of flows satisfying conditions (i)-(iv) in p.\pageref{classC}.

\section{Uniqueness Theorem of Self-Conformal Surfaces in $\R^3$}
Using the Willmore energy, the round $2$-spheres in $\R^3$ can be shown to be the only closed self-conformal solutions to the inverse mean curvature flow (IMCF):
\[\left(\pf{F}{t}\right)^\perp = -\dfrac{1}{H}\nu\]
where $H$ is the mean curvature. Furthermore, the round $n$-sphere in $\R^{n+1}$, where $n \geq 3$, can be similarly shown to be the only solution to the IMCF which evolves along a conformal Killing field .

Throughout the article, we will implicitly assume that solutions to the flows satisfy curvature conditions so that the speed function is well-defined. For instance, for the inverse mean curvature flow $(\partial_t F)^\perp = -\frac{1}{H}\nu$, we assume the hypersurface is mean-convex; while for the flow $(\partial_t F)^\perp = -\frac{\sigma_{k-1}}{\sigma_k}\nu$, we assume the hypersurface is $\sigma_k$-convex.

\begin{theorem}
\label{thm:2D}
The only closed self-conformal solutions $\Sigma^2 \subset \R^3$ to the inverse mean curvature flow $\big(\partial_t F\big)^\perp = -\frac{1}{H}\nu$ are round spheres. Furthermore, the only closed self-conformal solutions in $\R^{n+1}$, where $n \geq 3$, to the inverse mean curvature flow associated with a conformal Killing vector field $V$ in $\R^{n+1}$ with constant $\div(V)$ are round spheres.
\end{theorem}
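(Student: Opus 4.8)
The plan is to drive both statements with a single quantity, the Willmore-type energy $W(\Sigma) = \int_\Sigma H^n\,d\mu$, exploiting that it is at once \emph{monotone} along the IMCF and \emph{invariant} under the relevant conformal symmetries. Since a self-conformal solution evolves simultaneously by the flow and by the conformal diffeomorphisms $\Phi^V_t$, the invariance pins $W$ to a constant in $t$, while the monotonicity forces the solution to sit at the (rigid) equality case, which will turn out to be exactly the round spheres.

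First I would compute the evolution of $W$ along the IMCF. In the flat ambient space the standard IMCF evolution equations give $\partial_t(d\mu) = d\mu$ and $\partial_t H = -\Delta(1/H) - |A|^2/H$. Differentiating, using $\partial_t(d\mu)=d\mu$, and integrating the Laplacian term by parts yields
\[
\frac{d}{dt}\int_\Sigma H^n\,d\mu = \int_\Sigma\Big(-n(n-1)H^{n-4}\abs{\nabla H}^2 - nH^{n-2}\abs{A}^2 + H^n\Big)\,d\mu .
\]
By Cauchy--Schwarz, $H^2=(\lambda_1+\cdots+\lambda_n)^2 \leq n\abs{A}^2$, so $-nH^{n-2}\abs{A}^2 + H^n \leq 0$ pointwise, with equality exactly at umbilic points. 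Hence $\frac{d}{dt}W \leq 0$, and equality at a time $t$ holds iff $\Sigma_t$ is umbilic with $\nabla H \equiv 0$, i.e.\ a round sphere.

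Next I would match $W$ with the appropriate conformal invariance, and this is where the two cases separate. In dimension $n=2$ the functional $\int_\Sigma H^2\,d\mu$ is, up to a constant factor, the classical Willmore energy, which is invariant under the full conformal group of $\R^3\cup\{\infty\}$ for surfaces avoiding the centre of an inversion; as each $\Sigma_t$ is compact it avoids such points, so for \emph{any} conformal Killing field $V$ one has $W(\Sigma_t) = W(\Phi^V_t(\Sigma_0)) = W(\Sigma_0)$ constant, with no condition on $V$. For $n\geq 3$ the functional $\int_\Sigma H^n\,d\mu$ is no longer fully conformally invariant but remains scale invariant: under $X\mapsto\lambda X$ one has $H\mapsto\lambda^{-1}H$ and $d\mu\mapsto\lambda^n d\mu$, so the factors cancel. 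The hypothesis that $\div(V)$ be constant is precisely what kills the inversion term in the classification $V=v+AX+\mu X+2\ip{b}{X}X-\abs{X}^2 b$: a direct computation gives $\div(V)=(n+1)(\mu+2\ip{b}{X})$, so constancy forces $b=0$, whence $\Phi^V_t$ is a similarity $X\mapsto e^{\mu t}R_t X + c_t$ with $R_t$ orthogonal, and again $W(\Sigma_t)=W(\Sigma_0)$ is constant.

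In either case $W$ is constant along the flow while $\frac{d}{dt}W\leq 0$ with a rigid equality case, so $\frac{d}{dt}W\equiv 0$ and every $\Sigma_t$, in particular $\Sigma_0$, is umbilic, hence a round sphere. I expect the main point to be conceptual rather than computational: the crux is recognising that the single energy $\int_\Sigma H^n\,d\mu$ plays both roles, and that the gap between full conformal invariance (available only when $n=2$) and mere scale invariance (all that survives when $n\geq3$) is exactly what dictates the extra assumption that $\div(V)$ be constant in the higher-dimensional statement. The one piece requiring care is the rigidity of the equality case together with invoking the classical conformal invariance of the Willmore energy, including the observation that compactness keeps each $\Sigma_t$ away from inversion centres.
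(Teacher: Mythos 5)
Your proposal is correct and follows essentially the same route as the paper: the monotonicity of $W(\Sigma)=\int_\Sigma H^n\,d\mu$ along IMCF with rigidity at umbilic round spheres, combined with conformal invariance for $n=2$ and similarity invariance (forced by constant $\div(V)$, hence $b=0$ in the classification of conformal Killing fields) for $n\geq 3$. Your explicit computation of $\div(V)$ and your attention to the inversion-centre issue are slightly more detailed than the paper's exposition, but the argument is the same.
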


\begin{proof}
The Willmore energy
\[W(\Sigma) := \int_\Sigma H^n\,d\mu_\Sigma\]
is well-known (see e.g. \cite{Ch,Wh}) to be a conformal invariant for surfaces (i.e. $n = 2$) in $\R^3$, in a sense that if $\Phi : \R^{n+1} \to \R^{n+1}$ is a conformal diffeomorphism, then $W(\Phi(\Sigma)) = W(\Sigma)$. Generally for $n \geq 3$, $W$ is invariant when $\Phi$ is an isometry. We are going to show that $W(\Sigma_t)$ is monotone decreasing along the IMCF in all dimensions. Note that by reparametrization one can replace the embedding $F$ by one that satisfies $\pf{F}{t} = - \frac{1}{H}\nu$, and the Willmore energy defined as an integral is unchanged under reparametrizations. 

The Willmore energy $W(\Sigma_t)$ evolves under a general variation $\pf{F}{t} = f\nu$ by:
\begin{align*}
\frac{d}{dt}W(\Sigma_t) & = \int_\Sigma nH^{n-1}\left(\Delta f + f \abs{A}^2\right) + H^n(-fH) \,d\mu\\
& = \int_\Sigma -n(n-1)H^{n-2}\langle \nabla H, \nabla f\rangle + nfH^{n-1}\left(\abs{A}^2 - \frac{H^2}{n}\right)\,d\mu.
\end{align*}

Now take $f = -\dfrac{1}{H}$, we get
\begin{align*}
\frac{d}{dt}W(\Sigma_t) & = \int_\Sigma -n(n-1)H^{n-4} \abs{\nabla H}^2 - nH^{n-2}\left(\abs{A}^2 - \frac{H^2}{n}\right)\,d\mu \leq 0
\end{align*}
whenever $H > 0$ (which is implicitly assumed for the flow). The equality holds if and only if $H$ is a constant and $\Sigma_t$ is umbilic.

Now when $n = 2$, if $\Sigma_t$ is a self-conformal solution to the IMCF, then $\frac{d}{dt} W(\Sigma_t) \equiv 0$ for any $t > 0$, and consequently $H$ is constant and $\Sigma$ is umbilic, and hence is a round sphere in $\R^3$. Generally in higher dimensions, although $W$ is not a conformal invariance, it is invariant under isometry and rescaling. Hence, when the associated conformal Killing vector field $V$ has constant $\div(V)$, $\Sigma_t$ evolves by compositions of rescalings and isometries (including rotations and translations). We have $\frac{d}{dt}W(\Sigma_t) \equiv 0$ for any $t > 0$, and so we conclude that $\Sigma$ is a round sphere.
\end{proof}

\begin{remark}
The case $n \geq 3$ in Theorem \ref{thm:2D}	extends the uniqueness result in \cite{DLW} from homothetic self-similar solutions to wider class of self-conformal solutions which could evolve by rescaling, rotations, translations, and compositions of them.
\end{remark}

\section{Uniqueness Theorems in Higher Dimensions}
To extend Theorem \ref{thm:2D} to higher dimensional IMCF and to other flows \eqref{eq:FlowInverse}, we seek other approaches. One approach is the use of a point-wise conformal invariant $E_{ij}(a)$ to be defined below, and also the asymptotic properties of the flow \eqref{eq:FlowInverse} proved by Gerhardt \cite{G} and Urbas \cite{U}. The assumption on star-shapedness will be needed. 

A hypersurface $\Sigma^n\subset\R^{n+1}$ is \emph{star-shaped} if it can be written as a smooth radial graph over the unit sphere $\mathbb{S}^n$; that is, there exists a smooth function $u:\mathbb{S}^n\to(0,\infty)$ such that 
\begin{align*}
    \Sigma=\left\{u(p)p:p\in\mathbb{S}^n\right\}.
\end{align*}

Homothetic self-similar solutions to the flow \eqref{eq:FlowInverse}, whose conformal Killing fields are constant multiples of the position vector $X$, must be star-shaped. It is because $-\frac{1}{\rho} = \langle \mu X, \nu\rangle$, and hence $\langle X, \nu\rangle$ is non-vanishing on $\Sigma$. For self-conformal solutions, the star-shapedness is regarded as an additional condition.

Another approach of extending Theorem \ref{thm:2D} is to consider the monotone quantities
\[Q_k(t) :=\frac{\left(\int_{\Sigma_t}\sigma_kd\mu\right)^{\frac{1}{n-k}}}{\left(\int_{\Sigma_t}\sigma_{k-1}d\mu\right)^{\frac{1}{n-k+1}}}\]
where $\sigma_k$ is the $k$-th elementary symmetric function of principal curvatures with $\sigma_k(1,\cdots,1) = {n \choose k}$. These quantities are proved by Guan-Li in \cite{GL} to be monotone along the flow $(\partial_t F)^\perp = -\frac{\sigma_{k-1}}{\sigma_k}\nu$, and are stationary if and only if $\Sigma_t$ is a round sphere. Using these quantities, the Hsiung-Minkowski's identities, one can derive an identity which is sufficient to conclude the self-conformal solution is a round sphere.

 \subsection{A family of conformal invariant $2$-tensors} We consider the following family of $2$-tensors on $\Sigma$ which are invariant under conformal diffeomorphisms. They will be used, combining with the curvature asymptotics of the flow by \cite{G,U} to show the uniqueness of star-shaped self-conformal solutions.

\begin{lemma}
\label{lma:ConformalInvariant}
Each of the $2$-tensors in the following $1$-parameter family on a Euclidean hypersurface $\Sigma$
\[E_{ij}(a) := Hh_{ij}+aH^2g_{ij}-\frac{n}{2}h_i^k h_{kj} -\frac{2an+1}{2}|A|^2g_{ij}, \text{ where } 2an+1\geq 0,\]
is invariant under conformal diffeomorphisms $\Phi : \R^{n+1} \to \R^{n+1}$, and each of them is zero at a point $p \in \Sigma$ if and only if $\Sigma$ is umbilic at $p \in \Sigma$. Therefore, $E(a) \equiv 0$ on $\Sigma$ if and only if $\Sigma$ is a round sphere.
\end{lemma}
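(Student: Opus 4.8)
The plan is to observe that, although $E_{ij}(a)$ is written in terms of $h_{ij}$, $H$ and $\abs{A}^2$, it is in fact a universal expression in the \emph{trace-free} second fundamental form $\mathring{h}_{ij} := h_{ij} - \frac{H}{n}g_{ij}$ alone. Substituting $h_{ij} = \mathring{h}_{ij} + \frac{H}{n}g_{ij}$ and using $\abs{A}^2 = \abs{\mathring h}^2 + \frac{H^2}{n}$, a direct computation shows that every term containing $H$ cancels: the $H\mathring h_{ij}$ contributions coming from $Hh_{ij}$ and from $-\frac n2 h_i^{\,k}h_{kj}$ annihilate each other, while the coefficient of $H^2 g_{ij}$ collapses via $\frac1n + a - \frac{1}{2n} - \frac{2an+1}{2n} = 0$. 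What survives is
\[E_{ij}(a) = -\frac{n}{2}\,\mathring{h}_i^{\,k}\mathring{h}_{kj} - \frac{2an+1}{2}\,\abs{\mathring h}^2 g_{ij}.\]
This identity is the engine of the whole lemma and reduces both assertions to elementary facts about $\mathring h$.

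For the conformal invariance I would use the conformal covariance of the trace-free second fundamental form. A conformal diffeomorphism $\Phi$ of $\R^{n+1}$ satisfies $\Phi^*\delta = e^{2\phi}\delta$, so the geometry of the image $\Phi(\Sigma)$ in $(\R^{n+1},\delta)$ coincides with that of $\Sigma$ computed in the conformally changed ambient metric $e^{2\phi}\delta$; in particular the induced metric becomes $\tilde g_{ij} = e^{2\phi}g_{ij}$. The standard transformation rule for the second fundamental form under such a change, after subtracting traces, gives $\mathring{\tilde h}_{ij} = e^{\phi}\mathring{h}_{ij}$, the $\nabla_\nu\phi$ terms living only in the pure-trace part and hence dropping out. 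Raising an index with $\tilde g^{ij} = e^{-2\phi}g^{ij}$ one finds $\mathring{\tilde h}_i^{\,k}\mathring{\tilde h}_{kj} = \mathring{h}_i^{\,k}\mathring{h}_{kj}$ and $\abs{\mathring{\tilde h}}^2\tilde g_{ij} = \abs{\mathring h}^2 g_{ij}$, so both surviving terms have conformal weight zero and $\tilde E_{ij}(a) = E_{ij}(a)$. Alternatively, since $n+1\geq 3$, Liouville's theorem writes $\Phi$ as a composition of isometries, dilations and inversions; isometries fix $\mathring h$ and $g$, dilations scale them with matching weights, and one checks $\mathring{\tilde h}=e^{\phi}\mathring h$ directly for an inversion. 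I expect this transformation rule, and specifically its verification for the inversion — the only genuinely nontrivial generator — to be the main obstacle; everything else is algebra.

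Finally, the pointwise umbilicity characterization follows by taking the $g$-trace of the reduced expression. Since $g^{ij}\mathring h_i^{\,k}\mathring h_{kj} = \abs{\mathring h}^2$ and $g^{ij}g_{ij}=n$, one obtains $\mathrm{tr}_g E(a) = -n(an+1)\abs{\mathring h}^2$. If $\Sigma$ is umbilic at $p$ then $\mathring h=0$ there and $E_{ij}(a)=0$ trivially; conversely, if $E_{ij}(a)=0$ at $p$ then $n(an+1)\abs{\mathring h}^2=0$, and the hypothesis $2an+1\geq 0$ forces $an+1\geq\frac12>0$, whence $\abs{\mathring h}^2=0$, i.e. $p$ is umbilic. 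This is exactly where the constraint $2an+1\geq 0$ enters: at the excluded value $a=-\frac1n$ the trace vanishes identically and non-umbilic solutions of $E_{ij}(a)=0$ can occur. The closing assertion is then immediate, since $E(a)\equiv 0$ on $\Sigma$ holds if and only if $\Sigma$ is totally umbilic, and a closed totally umbilic hypersurface in $\R^{n+1}$ is a round sphere.
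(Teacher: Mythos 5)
Your proof is correct, and it is organized genuinely differently from the paper's, although both ultimately rest on the same standard transformation laws for $g_{ij}$ and $h_{ij}$ under a conformal diffeomorphism. The paper proves invariance by brute force: it substitutes the transformed $\widetilde H$, $\widetilde h_{ij}$, $(\widetilde{A^2})_{ij}$, $|\widetilde A|^2$ into $E_{ij}(a)$ and checks that all $D_\nu f$ terms cancel; it then proves the umbilicity statement separately, by diagonalizing $h$ at a point and rewriting each eigenvalue of $E(a)$ as $-\frac{n}{2}\left(\lambda_i-\frac{H}{n}\right)^2-\frac{2an+1}{2}\abs{A^\circ}^2$, a sum of nonpositive terms when $2an+1\geq 0$. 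You instead establish first the tensor identity
\[
E_{ij}(a)=-\frac{n}{2}\,\mathring{h}_i^{\,k}\mathring{h}_{kj}-\frac{2an+1}{2}\abs{\mathring{h}}^2 g_{ij},\qquad \mathring{h}_{ij}:=h_{ij}-\tfrac{H}{n}g_{ij},
\]
whose coefficient checks are correct, and everything else follows: conformal invariance because $\mathring{\widetilde h}_{ij}=e^{\phi}\mathring{h}_{ij}$ makes both terms weight-zero contractions, and umbilicity because $\operatorname{tr}_g E(a)=-n(an+1)\abs{\mathring h}^2$ with $an+1\geq\tfrac12$ whenever $2an+1\geq 0$. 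Your identity is the un-diagonalized form of the paper's eigenvalue formula, but putting it first buys real clarity: the cancellation of the $D_\nu f$ terms is explained (they live entirely in the pure-trace part of $h$) rather than verified term by term, and your trace argument needs only $an+1\neq 0$ and only the trace of the equation $E(a)=0$, so it is both shorter and slightly stronger than the paper's pointwise sign argument. Note also that the step you flagged as the ``main obstacle'' --- verifying $\mathring{\widetilde h}=e^{\phi}\mathring h$ for inversions --- requires no Liouville-type case analysis: it follows in one line from the transformation law $\widetilde h_{ij}=e^{f}\left(h_{ij}-(D_\nu f)g_{ij}\right)$ for an arbitrary conformal diffeomorphism, which is exactly the standard input the paper also takes for granted. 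Two harmless imprecisions: non-umbilic zeros of $E_{ij}(-1/n)$ exist only in even dimensions (the eigenvalues of $\mathring h$ must have equal modulus and vanishing sum), and the final ``totally umbilic implies round sphere'' step uses that $\Sigma$ is closed (to rule out pieces of planes), an assumption the lemma shares with the paper's own proof.
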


\begin{proof}
Let $\Phi : \R^{n+1} \to \R^{n+1}$ be a conformal diffeomorphism, and $\widetilde{\Sigma} := \Phi(\Sigma)$. Denote geometric quantities of $\Sigma$ by $g_{ij}$, $h_{ij}$, etc., and those of $\widetilde{\Sigma}$ by $\widetilde{g}_{ij}$, $\widetilde{h}_{ij}$, etc.

Under a conformal diffeomorphism, the first and second fundamental forms of $\Sigma$ and $\widetilde{\Sigma}$ are related by
\begin{align*}
\widetilde{g}_{ij} & =e^{2f}g_{ij},\\
\widetilde{h}_{ij} & =e^{f}\left(h_{ij}-(D_{\nu}f) g_{ij}\right).
\end{align*}

Using these, one can compute that curvatures of $\Sigma$ and $\widetilde{\Sigma}$ are related by:
\begin{align*}
\widetilde{H} & =\widetilde{g}^{ij}\widetilde{h}_{ij}=e^{-f}(H-nD_{\nu}f)\\
(\widetilde{A^2})_{ij} & =\widetilde{h}_{ip}\widetilde{g}^{pq}\widetilde{h}_{qj}=(h_{i}^q-(D_{\nu}f) \delta_i^q )(h_{qj}-(D_{\nu}f) g_{qj})\\
& =(A^2)_{ij}-2(D_{\nu}f) h_{ij}+(D_{\nu}f)^2g_{ij}\\
|\widetilde{A}|^2 & =\widetilde{g}^{ij}(\widetilde{A^2})_{ij}=e^{-2f}(|A|^2-2H(D_{\nu}f)+n(D_{\nu}f)^2)
\end{align*}

Using these, we can directly verify each $E_{ij}(a)$ is a conformal invariance, as all terms involving $D_\nu f$ got cancelled:

\begin{align*}
\widetilde{E}_{ij}(a) &= \widetilde{H}\widetilde{h}_{ij}+a\widetilde{H}^2\widetilde{g}_{ij}-\frac{n}{2}(\widetilde{A^2})_{ij}-\frac{2an+1}{2}|\widetilde{A}|^2\widetilde{g}_{ij}
 \\
  &= e^{-f}(H-nD_{\nu}f)e^{f}(h_{ij}-(D_{\nu}f)g_{ij})\\
  &\ \ \ +ae^{-2f}(H-nD_{\nu}f)^2e^{2f}g_{ij}\\
  &\ \ \ -\frac{n}{2}[(A^2)_{ij}-2(D_{\nu}f) h_{ij}+(D_{\nu}f)^2g_{ij}]\\
  &\ \ \ -\frac{2an+1}{2}e^{-2f}(|A|^2-2HD_{\nu}f+n(D_{\nu}f)^2)e^{2f}g_{ij}\\
  &= Hh_{ij}+aH^2g_{ij}-\frac{n}{2}(A^2)_{ij}-\frac{2an+1}{2}|A|^2g_{ij} =E_{ij}(a).
\end{align*}

Next, we show that for $2an+1\geq 0$, $E_{ij}(a)=0$ on $\Sigma$ if and only if $\Sigma$ is a round sphere. Consider an orthonormal basis $\{e_i\}$ of $T_p \Sigma$ at a point $p \in \Sigma$ such that $h_{ij}= h(e_i, e_j) = \lambda_i \delta_{ij}$. Then, $E_{ij}(a)$ is also diagonal with eigenvalues given by 
\[H\lambda_i+aH^2-\frac{n}{2}\lambda_i^2-\frac{2an+1}{2}|A|^2=-\frac{n}{2}\left(\lambda_i-\frac{H}{n}\right)^2-\frac{2an+1}{2}\abs{A^\circ}^2\]
where $A^\circ := A - \frac{H}{n}g$.

We see that $E_{ij}(a)=0$ at $p$ if and only if 
\[-\frac{n}{2}\left(\lambda_i-\frac{H}{n}\right)^2-\frac{2an+1}{2}|A^\circ|^2=0\]
for all $i = 1, \cdots, n$. Since $2an+1\geq 0$, this condition is equivalent to $\lambda_i=\frac{H}{n}$ for any $i = 1, \cdots, n$.  This shows by picking $2an + 1 \geq 0$, one has $E_{ij}(a) \equiv 0$ on $\Sigma$ if and only if $\Sigma$ is totally umbilic (i.e. $\Sigma$ is a round sphere in $\R^{n+1}$).
\end{proof}

Using the conformal invariants $E_{ij}$, we can show that round spheres are the only closed, star-shaped self-conformal solutions to a large class of inverse curvature flows by homogeneous speed functions $-\frac{1}{\rho}$ considered by Gerhardt \cite{G} and Urbas \cite{U}. We denote $\mathcal{C}$ to be the class of functions $\rho$ of principal curvatures considered in \cite{G} and \cite{U}. Precisely, $\rho$ is in the class $\mathcal{C}$ if and only if all of the following hold:\label{classC}
\begin{enumerate}[(i)]
	\item $\rho$ is $C^2$ and is positive on an open cone $\Gamma \subset \R^n$ containing $(1,\cdots,1)$.
	\item $\rho$ is a symmetric function on $\Gamma$.
	\item $\rho$ is homogeneous of degree 1 on $\Gamma$.
	\item $\pf{\rho}{\lambda_i} > 0$ on $\Gamma$ for any $i$.
	\item $\left[\frac{\partial^2\rho}{\partial\lambda_i\partial\lambda_j}\right]$ is semi-negative definite on $\Gamma$.
\end{enumerate}
Examples of such $\rho$'s include $\rho = H$, $\rho = \frac{\sigma_k}{\sigma_{k-1}}$, $\rho=(\sigma_i/\sigma_j)^{1/(i-j)}$, $\rho=\sigma_k^{1/k}$, etc.

\begin{theorem}
\label{thm:StarShaped}
The only closed, star-shaped self-conformal solution $\Sigma^n \subset \R^{n+1}$ to the flow $(\partial_t F)^\perp = -\frac{1}{\rho}\nu$ with $\rho \in \mathcal{C}$ are round spheres.	
\end{theorem}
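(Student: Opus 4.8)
The plan is to extract from the family $E_{ij}(a)$ a genuinely conformally invariant \emph{integral} quantity, and to combine its invariance with the large-time behaviour of the flow supplied by Gerhardt \cite{G} and Urbas \cite{U}. The starting observation is that, although the pointwise quantity $\abs{A^\circ}^2 = \abs{A}^2 - H^2/n$ is not conformally invariant --- the transformation rules in the proof of Lemma \ref{lma:ConformalInvariant} give $\abs{\widetilde A^\circ}^2 = e^{-2f}\abs{A^\circ}^2$ while $d\mu_{\widetilde\Sigma} = e^{nf}\,d\mu_\Sigma$ --- the density $\abs{A^\circ}^n\,d\mu$ has total conformal weight zero, so
\[\mathcal{W}(\Sigma) := \int_\Sigma \abs{A^\circ}^n\,d\mu\]
is invariant under every conformal diffeomorphism of $\R^{n+1}$. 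Equivalently, this quantity is built from the invariant tensor $E_{ij}(a)$, since one computes $\operatorname{tr}_g E(a) = -n(an+1)\abs{A^\circ}^2$ from the eigenvalues in Lemma \ref{lma:ConformalInvariant}. In particular $\mathcal{W}$ is invariant under the homotheties $X \mapsto \mu X$ as well as under inversions and isometries.

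Next I would use that a self-conformal solution evolves by $\Sigma_t = \Phi^V_t(\Sigma_0)$ with each $\Phi^V_t$ a conformal diffeomorphism, so the conformal invariance of $\mathcal{W}$ yields $\mathcal{W}(\Sigma_t) = \mathcal{W}(\Sigma_0)$ for all $t \geq 0$; the quantity is therefore not merely monotone but \emph{exactly constant} along the flow. The third step invokes the star-shaped hypothesis. For $\rho \in \sC$ and star-shaped initial data, the results of Gerhardt \cite{G} and Urbas \cite{U} guarantee that $(\partial_t F)^\perp = -\frac1\rho\nu$ exists for all time and that, after rescaling by the appropriate exponential factor, the hypersurfaces $\widetilde\Sigma_t$ converge in $C^\infty$ to a round sphere. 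Since $\mathcal{W}$ is scale invariant, $\mathcal{W}(\Sigma_t) = \mathcal{W}(\widetilde\Sigma_t)$, and the $C^2$-convergence to a sphere (on which $A^\circ \equiv 0$, with areas converging to that of the limit sphere) forces $\mathcal{W}(\widetilde\Sigma_t) \to 0$. Combining this with the constancy from the previous step gives $\mathcal{W}(\Sigma_0) = 0$, whence $\abs{A^\circ} \equiv 0$ on $\Sigma_0$; that is, $\Sigma_0$ is totally umbilic and hence a round sphere (equivalently, $E_{ij}(a) \equiv 0$ by Lemma \ref{lma:ConformalInvariant}).

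I expect the delicate point to be precisely the decision to globalize via the scale- and conformally invariant integral $\mathcal{W}$ rather than pointwise. A naive pointwise approach --- tracking $E(a)$ at a single point $\Phi^V_t(p)$ and letting $t \to \infty$ --- fails, because the composed conformal maps carrying $\Sigma_0$ onto $\widetilde\Sigma_t$ can concentrate area (as M\"obius-type transformations do), so the conformal factor relating $g_{\Sigma_0}$ to the limiting spherical metric may blow up at $p$, and the pointwise value of $E(a)$ then carries no usable information in the limit. The integral $\mathcal{W}$ is exactly the object insensitive to such conformal concentration, which is why the conformally invariant \emph{density} $\abs{A^\circ}^n\,d\mu$, and not the invariant tensor $E(a)$ by itself, is the operative tool. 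The remaining care is to confirm that the Gerhardt--Urbas convergence holds in a topology (here $C^2$ suffices) strong enough to control the curvature integrand and that the limiting areas remain finite, so that $\mathcal{W}(\widetilde\Sigma_t) \to 0$ is legitimate.
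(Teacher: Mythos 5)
Your proposal is correct, and it follows the same skeleton as the paper's proof --- a conformally invariant quantity that is forced to vanish by the Gerhardt--Urbas asymptotics --- but the invariant object you use is genuinely different. The paper works with the pointwise $2$-tensor $E_{ij}(a)$ of Lemma \ref{lma:ConformalInvariant}: it computes, in the graph coordinates of the rescaled flow, that $\widetilde{E}_{ij}(a) = O\big(e^{-\min\{\beta,\mu^{-1}\}t}\big)$, and then invokes the exact invariance of the components of $E_{ij}(a)$ under the conformal maps $\Phi^V_t$ (and the rescaling) to conclude $E_{ij}(a) \equiv 0$ on $\Sigma_0$. You instead integrate: your density $\abs{A^\circ}^n\,d\mu$ does have total conformal weight zero (from $\abs{\widetilde{A}^\circ}^2 = e^{-2f}\abs{A^\circ}^2$, which follows from the transformation rules already computed in the proof of Lemma \ref{lma:ConformalInvariant}, together with $d\widetilde{\mu} = e^{nf}d\mu$), so $\mathcal{W}(\Sigma_t) \equiv \mathcal{W}(\Sigma_0)$, while the estimates $\widetilde{u}\,\widetilde{h}_i^j = \delta_i^j + O(e^{-\beta t})$ with $\widetilde{u}$ pinched between positive constants give $\sup_{\widetilde{\Sigma}_t}\abs{A^\circ} = O(e^{-\beta t})$ and uniformly bounded areas, hence $\mathcal{W}(\widetilde{\Sigma}_t)\to 0$ and $A^\circ \equiv 0$ on $\Sigma_0$. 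What your route buys is robustness at exactly the point you flag: the conclusion $\mathcal{W}(\Sigma_0)=0$ requires no control whatsoever on the pointwise behaviour of the conformal factor of the composed maps $\Phi^V_t$ along $\Sigma_0$ (which, for a general conformal Killing field with non-constant divergence, can in principle concentrate), only the sup-norm decay of $\abs{A^\circ}$ on the rescaled hypersurfaces and an area bound. The paper's pointwise version gives, in exchange, the slightly stronger statement that the tensor $E_{ij}(a)$ itself is preserved and vanishes, but for the theorem both deliver the same conclusion: total umbilicity, hence a round sphere. Two minor remarks: your identity $\operatorname{tr}_g E(a) = -n(an+1)\abs{A^\circ}^2$ is correct, but note that this trace is taken with $g^{ij}$ and so carries conformal weight $-2$ rather than $0$; the actual invariance of $\mathcal{W}$ should rest (as your weight count does) on the direct transformation of $\abs{A^\circ}^2$ and $d\mu$, not on the invariance of $E(a)$. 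Also, you should state explicitly that long-time existence and the quoted asymptotics require the star-shaped initial hypersurface to have principal curvatures in the cone $\Gamma$ on which $\rho$ is defined --- the same standing assumption the paper makes implicitly.
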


\begin{proof}
Let $F_t$ be the evolving embedding of $\Sigma_t$ along the flow \eqref{eq:FlowInverse}. By \cite{G,U}, the star-shaped condition is preserved so that $\Sigma_t$ can be regarded as a evolving graph over the unit sphere $\mathbb{S}^n$ with the round metric $g_{\mathbb{S}^n}$. Let $\sigma_{ij}$ be the local components of $g_{\mathbb{S}^n}$. Denote the evolving graph function by $u(t) : \mathbb{S}^n \to \R_+$, then from \cite{G, U} we have the following estimates: there exist constants $\beta, C > 0$ such that
\begin{align*}
0 < \frac{1}{C} e^{t/\mu} \leq u(t) & \leq Ce^{t/\mu}  \;\; \text{ where } \mu = \rho(1, \cdots, 1),\\
\abs{\nabla u(t)}_{g_{\mathbb{S}^n}} & \leq C,\\
u(t) h_i^j(t) & = \delta_i^j + O(e^{-\beta t}).
\end{align*}
Here $\nabla$ is the Levi-Civita connection of $\mathbb{S}^n$. By rescaling the flow by $\widetilde{F}_t := e^{-t/\mu}F_t$ so that the graph function becomes $\widetilde{u}(t) := e^{-t/\mu}u(t)$, the above estimates can be written as:
\begin{align*}
\frac{1}{C} \leq \widetilde{u}(t) & \leq C\\
\abs{\nabla\widetilde{u}(t)}_{g_{\mathbb{S}^n}} & \leq Ce^{-t/\mu},\\
\widetilde{u}(t) \widetilde{h}_i^j(t) & = \delta_i^j + O(e^{-\beta t})
\end{align*}
The first fundamental form of $\widetilde{F}_t$ is given by:
\[\widetilde{g} = \widetilde{u}^2 g_{\mathbb{S}^n} + d\widetilde{u} \otimes d\widetilde{u} = \widetilde{u}^2 g_{\mathbb{S}^n} + O\big(e^{-2t/\mu}\big)\]
which is uniformly equivalent to $g(0)$. We can then derive that
\begin{align*}
\widetilde{E}_{ij}(a) & = \widetilde{H}\widetilde{h}_{ij}+a\widetilde{H}^2\widetilde{g}_{ij}-\frac{n}{2}(\widetilde{A^2})_{ij}-\frac{2an+1}{2}|\widetilde{A}|^2\widetilde{g}_{ij}\\
& = \left(\frac{n}{\widetilde{u}} + O\big(e^{-\beta t}\big)\right)\left(\widetilde{u}^2\sigma_{ik}+O\big(e^{-2t/\mu}\big)\right)\left(\frac{1}{\widetilde{u}}\delta^k_j + O\big(e^{-\beta t}\big)\right)\\
& \hskip 0.4cm + a\left(\frac{n}{\widetilde{u}}+O\big(e^{-\beta t}\big)\right)^2\left(\widetilde{u}^2\sigma_{ij} + O\big(e^{-2t/\mu}\big)\right)\\
& \hskip 0.4cm -\frac{n}{2}(\widetilde{u}^2\sigma_{ik}+O\big(e^{-2t/\mu}\big))\left(\frac{1}{\widetilde{u}}\delta^k_l + O\big(e^{-\beta t}\big)\right)\left(\frac{1}{\widetilde{u}}\delta^l_j + O\big(e^{-\beta t}\big)\right)\\
& \hskip 0.4cm -\frac{2an+1}{2}\left(\frac{1}{\widetilde{u}}\delta_k^l + O\big(e^{-\beta t}\big)\right)\left(\frac{1}{\widetilde{u}}\delta_l^k + O\big(e^{-\beta t}\big)\right)\left(\widetilde{u}^2\sigma_{ik}+O\big(e^{-2t/\mu}\big)\right)\\
& = n\sigma_{ij} + an^2\sigma_{ij} - \frac{n}{2}\sigma_{ij}-\frac{2an+1}{2}n\sigma_{ij} + O\big(e^{-\min\{\beta, \mu^{-1}\}t}\big)\\
& = O\big(e^{-\min\{\beta, \mu^{-1}\}t}\big).
\end{align*}
As $t \to +\infty$, we have $\widetilde{E}_{ij}(a) \to 0$.

Now given that $\Sigma^n$ is a self-conformal solution so that $\widetilde{E}_{ij}(a)$ is in fact independent of $t$ by Lemma \ref{lma:ConformalInvariant}, we have $\widetilde{E}_{ij}(a) \equiv 0$ on $\Sigma$. Pick any $a$ such that $2an+1 \geq 0$, then by Lemma \ref{lma:ConformalInvariant} again we have proved that $\Sigma^n$ is a round sphere.

\end{proof}

\subsection{Conformal Killing fields in $\R^{n+1}$}
The uniqueness result in Theorem \ref{thm:StarShaped} holds for a very large class of inverse curvature flows, but it requires the hypersurface to be star-shaped. We next discuss the uniqueness result for a specific type of flows (still including IMCF) without the star-shapedness condition but with conditions on the associated conformal Killing fields instead.

Let's first discuss the classification of conformal Killing fields on Euclidean spaces, which is well-known. For completeness and reader's convenience, we include some relevant parts of the classification result here.

Denote the standard Euclidean metric on $\R^{n+1}$ by $\delta$, the Levi-Civita connection of $(\R^{n+1},\delta)$ by $D$, the Laplacian by $\bar{\Delta}$, and the standard coordinates by $x=(x^1,\ldots,x^{n+1})$. 

\begin{proposition}[c.f. \cite{CFT}]
\label{prop:alpha_affine}
Let $V=V^i\partial_i$ be a conformal Killing field on $\R^{n+1}$ where $n \geq 2$. Then, $\alpha=\frac{\div(V)}{n+1}$ is an affine linear function on $\R^{n+1}$, i.e.
\begin{align*}
    \alpha(x_1,\ldots,x_{n+1})=A+\sum_{i=1}^{n+1} B^i x_i
\end{align*}
for some constants $A, B^i \in\R$.
\end{proposition}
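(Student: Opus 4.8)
The plan is to work entirely in standard Euclidean coordinates, where the conformal Killing equation $\mathcal{L}_V\delta = 2\alpha\delta$ reads
\[
\partial_i V_j + \partial_j V_i = 2\alpha\,\delta_{ij},
\]
with $V_j = V^j$ and $\partial_i$ the ordinary partial derivatives (the Levi-Civita connection $D$ of $(\R^{n+1},\delta)$ agrees with $\partial$ in these coordinates). The strategy is to differentiate this identity twice and exploit the commutativity of mixed partials of the components $V_i$ to force the Hessian of $\alpha$ to vanish. First I would differentiate the displayed equation in $x_k$, then cyclically permute the three free indices $(i,j,k)$ to obtain two further relations. Forming the combination (first) $+$ (third) $-$ (second) and cancelling terms via $\partial_k\partial_j = \partial_j\partial_k$ collapses the left-hand side to a single third derivative of $V$, yielding the key intermediate identity
\[
\partial_j\partial_k V_i = (\partial_k\alpha)\delta_{ij} + (\partial_j\alpha)\delta_{ik} - (\partial_i\alpha)\delta_{jk}.
\]
This expresses the full Hessian of each component $V_i$ purely in terms of the gradient of $\alpha$.

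Next I would differentiate this identity once more, in $x_l$, placing $\partial_l\partial_j\partial_k V_i$ on the left and second derivatives of $\alpha$ on the right. Since the left-hand side is fully symmetric in its lower indices, I would equate it with the version obtained by swapping $j\leftrightarrow l$. The term symmetric in that swap cancels, producing the purely $\alpha$-dependent relation
\[
(\partial_k\partial_l\alpha)\delta_{ij} - (\partial_i\partial_l\alpha)\delta_{jk} = (\partial_j\partial_k\alpha)\delta_{il} - (\partial_i\partial_j\alpha)\delta_{lk}.
\]
Writing $N = n+1$ and contracting $i=j$ with a sum over $i$ then gives $(N-2)\,\partial_k\partial_l\alpha = -(\bar{\Delta}\alpha)\,\delta_{kl}$. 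Taking a further trace over $k,l$ forces $(2N-2)\,\bar{\Delta}\alpha = 0$, hence $\bar{\Delta}\alpha = 0$; substituting back leaves $(N-2)\,\partial_k\partial_l\alpha = 0$. Because $n\geq 2$ means $N-2 = n-1 \geq 1 > 0$, every second derivative of $\alpha$ vanishes, so $\alpha$ is affine linear, which is exactly the assertion.

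The computation is essentially routine index bookkeeping, and I expect the only genuinely delicate point to be the dependence on dimension. The hypothesis $n \geq 2$ (equivalently $N \geq 3$) is precisely what guarantees the factor $N-2$ is nonzero in the final step: when $N=2$ that factor drops out and one recovers only $\bar{\Delta}\alpha = 0$ rather than the vanishing of the whole Hessian, which mirrors the well-known abundance of (holomorphic) conformal maps in dimension two. Thus the main thing to get right is to track exactly where the dimensional factor enters and to confirm it is positive under the stated hypothesis; the rest is careful but mechanical.
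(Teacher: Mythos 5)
Your proposal is correct and follows essentially the same route as the paper: both derive the key identity $D_jD_kV^i=(\partial_k\alpha)\delta_{ij}+(\partial_j\alpha)\delta_{ik}-(\partial_i\alpha)\delta_{jk}$ from the permuted second-derivative relations, and both arrive at the same Hessian constraint $(n-1)\,D_kD_l\alpha=-(\bar{\Delta}\alpha)\,\delta_{kl}$ before tracing to get harmonicity and concluding $D_kD_l\alpha=0$ from $n\geq 2$. The only cosmetic difference is that you obtain this constraint by differentiating the key identity once more and symmetrizing in the derivative indices, whereas the paper applies $\bar{\Delta}$ to the conformal Killing equation after computing $\bar{\Delta}V^k=(1-n)D_k\alpha$; your remark about where the dimensional factor enters is also accurate.
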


\begin{proof}
From $\sL_V\delta=2\alpha\delta$, we have $D_jV^k+D_kV^j=2\alpha\delta_{jk}$. Differentiating both sides with respect to $x^i$, we have 
\begin{align}
    \label{conformal factor eqn 1}\tag{*}
    D_iD_jV^k+D_iD_kV^j=2\delta_{jk}D_i\alpha
\end{align}
Permuting the indices $i,j,k$, we have the similar
\begin{align}
    \label{conformal factor eqn 2}\tag{**}
    D_jD_iV^k+D_jD_kV^i=2\delta_{ik}D_j\alpha \\
    \label{conformal factor eqn 3}\tag{***}
    D_kD_iV^j+D_kD_jV^i=2\delta_{ij}D_k\alpha
\end{align}
Then, taking the trace on (\ref{conformal factor eqn 1})$+$(\ref{conformal factor eqn 2})$-$(\ref{conformal factor eqn 3}) gives
\begin{align}
    \label{second derivative of V}
    D_iD_jV^k=\delta_{jk}D_i\alpha+\delta_{ik}D_j\alpha-\delta_{ij}D_k\alpha
\end{align}
Taking trace of (\ref{second derivative of V}), we have 
\begin{align*}
    \bar{\Delta}V^k&=\sum_i D_iD_iV^k \\
    &=(1-n)D_k\alpha.
\end{align*}
Using the commutativity of $D$ again, we have 
\begin{align*}
    \bar{\Delta}(D_iV^j)=D_i\left(\bar{\Delta}V^j\right)=(1-n)D_iD_j\alpha
\end{align*}
and similarly,
\begin{align*}
    \bar{\Delta}(D_jV^i)=(1-n)D_jD_i\alpha=(1-n)D_iD_j\alpha.
\end{align*}
Thus, 
\begin{equation}
\label{conformal factor}	
\bar{\Delta}(\mathcal{L}_Vg)_{ij} = \bar{\Delta}(D_iV^j)+\bar{\Delta}(D_jV^i) \; \implies \; 2\delta_{ij}\bar{\Delta}\alpha = 2(1-n)D_i D_j \alpha.
\end{equation}
Taking the trace, we get
\[2(n+1)\bar{\Delta}\alpha = 2(1-n)\bar{\Delta}\alpha,\]
proving that $\alpha$ is harmonic on $\R^{n+1}$. By \eqref{conformal factor}, we conclude $D_i D_j \alpha = 0$, completing the proof.
\end{proof}

By \eqref{second derivative of V}, we then conclude that each component $V^k$ has constant second derivatives, so we have:
\begin{corollary}
\label{cor:V is second order}
Let $V=V^i\partial_i$ be a conformal Killing field on $(\R^{n+1},\delta)$ where $n+1\geq 3$. Then each component of $V$ is quadratic; that is, for each $1\leq k\leq n+1$, 
\begin{align*}
    V^k(x^1,\ldots,x^{n+1})=a^k+b^k_lx^l+c^k_{ij}x^ix^j
\end{align*}
for some $a^k,b^k_l,c^k_{ij}\in\R$. 
\end{corollary}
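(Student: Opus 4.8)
The plan is to read the result directly off the structure equation \eqref{second derivative of V}, which was derived in the proof of Proposition \ref{prop:alpha_affine}. That identity,
\[D_iD_jV^k=\delta_{jk}D_i\alpha+\delta_{ik}D_j\alpha-\delta_{ij}D_k\alpha,\]
expresses every second derivative of the components $V^k$ purely in terms of the first derivatives of $\alpha=\frac{\div(V)}{n+1}$. This is the whole engine of the corollary, so the proof will be almost entirely bookkeeping.

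First I would invoke Proposition \ref{prop:alpha_affine}, which already establishes that $\alpha$ is affine linear on $\R^{n+1}$; hence each $D_i\alpha$ equals the constant coefficient $B^i$ appearing in the affine expression for $\alpha$. Substituting these constants into the displayed identity shows that the right-hand side is independent of the point $x$, so every second partial derivative $D_iD_jV^k$ is constant on all of $\R^{n+1}$. It then remains to observe that a smooth function whose second partials are all constant must be a quadratic polynomial: differentiating $D_iD_jV^k=\text{const}$ gives $D_lD_iD_jV^k=0$, so all third-order derivatives vanish, and Taylor's theorem (with remainder terminating at order two) yields the stated form $V^k=a^k+b^k_lx^l+c^k_{ij}x^ix^j$.

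There is no genuine obstacle here; the corollary is an immediate consequence of \eqref{second derivative of V} once $\alpha$ is known to be affine. The only subtlety worth flagging is the dimension hypothesis $n+1\geq 3$, i.e.\ $n\geq 2$: it is inherited from Proposition \ref{prop:alpha_affine}, whose trace argument needs the factor $(1-n)$ to be nonzero in order to force $\alpha$ harmonic and hence affine. Since the quadratic conclusion rests entirely on $\alpha$ being affine, it carries the same restriction, and I would simply cite the proposition rather than re-derive the affineness of $\alpha$.
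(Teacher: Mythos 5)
Your argument is exactly the paper's: the authors deduce the corollary directly from \eqref{second derivative of V} together with Proposition \ref{prop:alpha_affine}, noting that constant $D_i\alpha$ forces all second partials of $V^k$ to be constant, hence each $V^k$ is quadratic. Your write-up just makes the final Taylor-expansion step and the role of the dimension hypothesis explicit, which is fine.
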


\subsection{Uniqueness results via Guan-Li's monotone quantities.}

Recall that the uniqueness result in dimension two (Theorem \ref{thm:2D}) was proved by considering the Willmore energy which, in the surface case, is monotone along some inverse curvature flows including IMCF. In higher dimensions, the Willmore energy is not a monotone quantity under these flows. However, there are well-known monotone quantities $Q_k$ defined below along the flow $(\partial_t F)^\perp = -\frac{\sigma_{k-1}}{\sigma_k}\nu$ introduced by Guan-Li in \cite{GL}. These quantities are scale-invariant but not conformal-invariant. Combining the results in Proposition \ref{prop:alpha_affine} and Hsiung-Minkowski's identities, one can show they are stationary under conformal diffeomorphisms under a condition on the divergence of the conformal Killing vector field.

\begin{theorem}
\label{thm:reflect}
Let $\Sigma^n \subset \R^{n+1}$ be a closed self-conformal solution to the flow $(\partial_t F)^\perp = -\frac{\sigma_{k-1}}{\sigma_k}\nu$, where $k = 1, \cdots, n$, associated to a conformal Killing vector field $V$. Suppose for some $t \geq 0$,  $\div(V)$ satisfies the condition
\begin{equation}
\label{eq:ConditionV}
\frac{\int_{\Sigma_t} \sigma_k \div(V)\,d\mu}{\int_{\Sigma_t}\sigma_k\,d\mu} = \frac{\int_{\Sigma_t} \sigma_{k-1} \div(V)\,d\mu}{\int_{\Sigma_t}\sigma_{k-1}\,d\mu},
\end{equation}
then $\Sigma^n$ must be a round sphere.

In particular, if one of the following holds, then $\Sigma^n$ must be a round sphere.
\begin{enumerate}
\item when $\div(V)$ is a constant, or
\item when the ``center of mass'' of $\sigma_k$ is equal to that of $\sigma_{k-1}$ at some time $t \geq 0$. Precisely, it means that for any $i = 1, \cdots, n+1$, we have:
\begin{equation}
\label{eq:CenterOfMass}
\frac{\int_{\Sigma_t}\sigma_k x^i\,d\mu}{\int_{\Sigma_t}\sigma_k\,d\mu} = \frac{\int_{\Sigma_t}\sigma_{k-1} x^i\,d\mu}{\int_{\Sigma_t}\sigma_{k-1}\,d\mu}.
\end{equation}
Here $x^i$ is the $i$-th coordinate function in $\R^{n+1}$.
\end{enumerate}
\end{theorem}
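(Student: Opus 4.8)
The plan is to compute the time derivative of the Guan--Li quantity $Q_k(t)$ for the self-conformal solution in two independent ways and match them. On one hand, Guan--Li's monotonicity \cite{GL} asserts that $Q_k(t)$ is monotone along the flow $(\partial_t F)^\perp = -\frac{\sigma_{k-1}}{\sigma_k}\nu$, and that $\frac{d}{dt}Q_k(t_0)=0$ at an instant $t_0$ forces the integrand of their monotonicity formula to vanish, hence $\Sigma_{t_0}$ to be umbilic, i.e. a round sphere. On the other hand, since $\Sigma_t = \Phi_t^V(\Sigma_0)$ evolves by conformal diffeomorphisms, I would compute $\frac{d}{dt}Q_k$ directly from the conformal transformation rules recorded in Lemma \ref{lma:ConformalInvariant}. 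The strategy is to show that hypothesis \eqref{eq:ConditionV} forces this conformally-computed derivative to vanish at the given time $t$, and then invoke Guan--Li's rigidity.

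First I would establish the variation formula
\[
\frac{d}{dt}\int_{\Sigma_t}\sigma_j\,d\mu = (n-j)\int_{\Sigma_t}\alpha\,\sigma_j\,d\mu, \qquad \alpha = \frac{\div(V)}{n+1}.
\]
To do this, write the accumulated conformal factor of $\Phi_t^V$ as $e^{2\psi}$, so that by \eqref{eq:ConformalFlow} the instantaneous rate is $\frac{d\psi}{dt}=\alpha$ (with $\psi=0$ at the reference time). Using $\widetilde g_{ij}=e^{2\psi}g_{ij}$ and $\widetilde h_{ij}=e^{\psi}(h_{ij}-(D_\nu\psi)g_{ij})$ from Lemma \ref{lma:ConformalInvariant}, the principal curvatures transform as $\widetilde\lambda_i=e^{-\psi}(\lambda_i-D_\nu\psi)$, whence $\widetilde\sigma_j\,d\widetilde\mu=e^{(n-j)\psi}\sigma_j(\lambda-D_\nu\psi)\,d\mu$. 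Differentiating at the given time and using $\sum_i\partial_{\lambda_i}\sigma_j=(n-j+1)\sigma_{j-1}$ gives
\[
\frac{d}{dt}\int_{\Sigma_t}\sigma_j\,d\mu = \int_{\Sigma_t}\Big[(n-j)\alpha\,\sigma_j-(n-j+1)\sigma_{j-1}\,D_\nu\alpha\Big]\,d\mu.
\]
The crucial step is to kill the second term. By Proposition \ref{prop:alpha_affine}, $\alpha=A+\langle B,X\rangle$ is affine, so $D\alpha=B$ is a constant ambient vector and $D_\nu\alpha=\langle B,\nu\rangle$. Applying the divergence theorem to the Newton-tensor field $T_{j-1}B^\top$ (where $B^\top$ is the tangential part of $B$) and using $\nabla_i(B^\top)_l=\langle B,\nu\rangle h_{il}$ together with $\operatorname{tr}(T_{j-1}h)=j\sigma_j$ yields the Minkowski-type identity $\int_{\Sigma_t}\langle B,\nu\rangle\sigma_{j-1}\,d\mu=0$, which annihilates the second term and establishes the displayed formula.

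With this in hand I would compute
\[
\frac{d}{dt}\log Q_k = \frac{\int_{\Sigma_t}\alpha\sigma_k\,d\mu}{\int_{\Sigma_t}\sigma_k\,d\mu} - \frac{\int_{\Sigma_t}\alpha\sigma_{k-1}\,d\mu}{\int_{\Sigma_t}\sigma_{k-1}\,d\mu} = \frac{1}{n+1}\left[\frac{\int_{\Sigma_t}\sigma_k\div(V)\,d\mu}{\int_{\Sigma_t}\sigma_k\,d\mu} - \frac{\int_{\Sigma_t}\sigma_{k-1}\div(V)\,d\mu}{\int_{\Sigma_t}\sigma_{k-1}\,d\mu}\right],
\]
so that hypothesis \eqref{eq:ConditionV} is precisely the statement that $\frac{d}{dt}Q_k(t)=0$ at the given time. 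Guan--Li's rigidity then forces $\Sigma_t$ to be a round sphere at that instant, and since the flow evolves by conformal diffeomorphisms, which send round spheres to round spheres, $\Sigma$ is a round sphere. The two special cases follow by substitution: if $\div(V)\equiv c$ is constant then both ratios in \eqref{eq:ConditionV} equal $c$; and writing $\div(V)=(n+1)\big(A+\sum_i B^i x^i\big)$, the condition \eqref{eq:ConditionV} reduces to $\sum_i B^i\big(\frac{\int\sigma_k x^i\,d\mu}{\int\sigma_k\,d\mu}-\frac{\int\sigma_{k-1}x^i\,d\mu}{\int\sigma_{k-1}\,d\mu}\big)=0$, which holds whenever the center-of-mass condition \eqref{eq:CenterOfMass} does.

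The main obstacle is the clean variation formula---specifically, verifying that the normal-derivative term $\int_{\Sigma_t}\sigma_{j-1}D_\nu\alpha\,d\mu$ vanishes. This is exactly where the two structural inputs interlock: the affineness of $\alpha$ (Proposition \ref{prop:alpha_affine}) is what makes $D\alpha$ a constant vector, and only then does the Hsiung--Minkowski machinery apply to eliminate that term. Keeping the combinatorial constants consistent with the normalization $\sigma_k(1,\dots,1)=\binom{n}{k}$---the factor $n-j$ above and the identities $\operatorname{tr}(T_{j-1})=(n-j+1)\sigma_{j-1}$, $\operatorname{tr}(T_{j-1}h)=j\sigma_j$---is the routine but delicate bookkeeping; one should also note the boundary cases $j\in\{0,1\}$ (where $\sigma_{-1}$ is absent and the term $\int\langle B,\nu\rangle\,d\mu=0$ by the divergence theorem for the constant field $B$).
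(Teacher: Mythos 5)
Your proposal is correct and reaches the same endgame as the paper (Guan--Li rigidity for $Q_k$ applied at the single instant where $\frac{d}{dt}Q_k=0$), but the route to the key variation formula is genuinely different. The paper computes $\frac{d}{dt}\int_{\Sigma_t}\sigma_l\,d\mu=(l+1)\int_{\Sigma_t}\sigma_{l+1}\ip{V}{\nu}\,d\mu$ from the quermassintegral first-variation formula with normal speed $\ip{V}{\nu}$, and then invokes the Hsiung--Minkowski identity \emph{for conformal Killing fields} as a black box to trade $\int\sigma_{l+1}\ip{V}{\nu}\,d\mu$ for $-c\int\sigma_l\,\div(V)\,d\mu$; Proposition \ref{prop:alpha_affine} is only needed there for the center-of-mass special case. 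You instead differentiate $\widetilde{\sigma}_j\,d\widetilde{\mu}=e^{(n-j)\psi}\sigma_j(\lambda-D_\nu\psi)\,d\mu$ using the conformal transformation laws already recorded in Lemma \ref{lma:ConformalInvariant}, which produces the extra term $-(n-j+1)\int\sigma_{j-1}D_\nu\alpha\,d\mu$, and you then need Proposition \ref{prop:alpha_affine} \emph{in the main argument} to make $D\alpha$ a constant vector so that the classical Minkowski identity for constant fields kills that term. This is a legitimate, essentially self-contained rederivation of the conformal Hsiung--Minkowski identity, and both routes land on $\frac{d}{dt}\log Q_k=\pm\frac{1}{n+1}\bigl(\frac{\int\sigma_k\div(V)}{\int\sigma_k}-\frac{\int\sigma_{k-1}\div(V)}{\int\sigma_{k-1}}\bigr)$, so hypothesis \eqref{eq:ConditionV} forces stationarity either way (the overall sign discrepancy with the paper is a normal-orientation convention and is immaterial here). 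Your handling of the final step is in fact slightly more careful than the paper's: you note explicitly that rigidity gives roundness of $\Sigma_t$ at one instant and that the conformal diffeomorphism $\Phi^V_{-t}$ carries the (compact) round sphere back to a round sphere.

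One small bookkeeping slip: applying the tangential divergence theorem to $T_{j-1}B^\top$ with $\operatorname{tr}(T_{j-1}h)=j\sigma_j$ yields $\int_{\Sigma}\ip{B}{\nu}\sigma_j\,d\mu=0$, not $\int_{\Sigma}\ip{B}{\nu}\sigma_{j-1}\,d\mu=0$; to annihilate the term $\int\sigma_{j-1}D_\nu\alpha\,d\mu$ you should use $T_{j-2}$ (with the $j=1$ case being the plain divergence theorem for the constant field $B$, as you note). The identity you need is true for every index, so this is a misprint rather than a gap.
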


Below are several interesting remarks of the theorem.

\begin{remark}
Homothetic self-similar solutions are special cases of self-conformal solutions with $V = \mu X$, where $\mu$ is a constant, and $X$ is the position vector field in $\R^{n+1}$. Such a vector field has a constant $\div(V)$ equal to $(n+1)\mu$. In this case, the result of Theorem \ref{thm:reflect} was proved by Drugan-Lee-Wheeler \cite{DLW} when $k = 1$, and Kwong-Lee-Pyo \cite{KLP} when $2 \leq k \leq n$. Hence, Theorem \ref{thm:reflect} further extends their uniqueness results to other conformal Killing vector fields $V$ with constant $\div(V)$. It is interesting that the proofs in \cite{DLW, KLP} and Theorem \ref{thm:reflect} all use the Hsiung-Minkowski's identities.\cite{Hs}
\end{remark}
\begin{remark}
Note also that when $k = 1$, the flow $(\partial_t F)^\perp = -\frac{\sigma_{k-1}}{\sigma_k}\nu$ is the inverse mean curvature flow. Hence, Theorem \ref{thm:reflect} gives an alternative proof of Theorem \ref{thm:2D} when $n \geq 3$.
\end{remark}
\begin{remark}
The ``center of mass'' condition \eqref{eq:CenterOfMass} is fulfilled when $\Sigma_t$ has reflectional symmetry across $(n+1)$ orthogonal planes in $\R^{n+1}$ that meet at a common point $P = (c_1, \cdots, c_{n+1}) \in \R^{n+1}$. Since in this case one has
\[\int_{\Sigma_t} \sigma_k (x_i - c_i)\,d\mu = \int_{\Sigma_t} \sigma_{k-1} (x_i - c_i)\,d\mu = 0 \; \text{ for any } i = 1, \cdots, n+1\]
\[\frac{\int_{\Sigma_t}\sigma_k x^i\,d\mu}{\int_{\Sigma_t}\sigma_k\,d\mu} = \frac{\int_{\Sigma_t}\sigma_k x^i\,d\mu}{\int_{\Sigma_t}\sigma_k\,d\mu} = c_i, \; \text{ for any } i = 1, \cdots, n+1.\]
Note that the uniqueness of self-conformal solutions under this reflectional symmetry assumption can also be obtained by invoking the classical Liouville's Theorem for conformal mappings in $\R^{n\geq 3}$, which asserts that any conformal mapping in $\R^{n\geq 3}$ is a composition of similarities (including rescalings, translations, rotations, and inversions), and the symmetry assumption reduces the self-conformal solution to a homothetic self-similar solution. Uniqueness of such a solution then follows from \cite{DLW} and \cite{KLP}. The proof of Theorem \ref{thm:reflect} in this special case does not rely on the classification results of conformal mappings in $\R^{n\geq3}$.
\end{remark}

\begin{proof}[Proof of Theorem \ref{thm:reflect}]
Let $V$ be a conformal Killing field in $\mathbb{R}^{n+1}$, then $\mathcal{L}_V \delta=\frac{2\div(V)}{n+1}\delta$. For each $k = 1, \cdots, n$, let $Q_k$ be the following scale-invariant quantity
\[Q_k(t) := \frac{\left(\int_{\Sigma_t}\sigma_k\,d\mu\right)^{\frac{1}{n-k}}}{\left(\int_{\Sigma_t}\sigma_{k-1}\,d\mu\right)^{\frac{1}{n-k+1}}}\]
which appeared in Guan-Li's work \cite{GL} about Alexandrov-Fenchel's inequalities.

Let $\Phi_t:\mathbb{R}^{n+1}\rightarrow \mathbb{R}^{n+1}$ be a diffeomorphsim generated by $V$, i.e. $\dot{\Phi}_t=V\circ\Phi_t$. A hypersurface $\Sigma$ deforms along the vector field $V$ if $\Sigma_t=\Phi_t(\Sigma_0)$ for any $t > 0$, and it implies $\left(\frac{\partial F}{\partial t}\right)^\perp = \ip{V}{\nu}\nu$. As in \cite[Lemma 5]{GL}, the integral $\int_{\Sigma_t} \sigma_l\,d\mu$ evolves by
\[\frac{d}{dt}\int_{\Sigma_t} \sigma_l\,d\mu=(l+1)\int_{\Sigma_t}\sigma_{l+1}\ip{V}{\nu}\,d\mu.\]
Using this, we can compute the evolution of $Q_k(t)$:
\begin{align*}
    \frac{d}{dt}Q_k(t)
    &= \frac{1}{n-k}\left(\int_{\Sigma_t} \sigma_k\,d\mu\right)^{\frac{1}{n-k}-1}\frac{\frac{d}{dt}\int_{\Sigma_t} \sigma_k\,d\mu}{\left(\int_{\Sigma_t}\sigma_{k-1}\,d\mu\right)^{\frac{1}{n-k+1}}}\\
    & \hskip 0.4cm -\frac{1}{n}\left(\int_{\Sigma_t} \sigma_k\,d\mu\right)^{\frac{1}{n-k}}\frac{\frac{d}{dt}\int_{\Sigma_t} \sigma_{k-1}\,d\mu}{\left(\int_{\Sigma_t}\sigma_{k-1}\,d\mu\right)^{\frac{1}{n-k+1}+1}}\\
    &= \frac{1}{n-k}\left(\int_{\Sigma_t} \sigma_k\,d\mu\right)^{\frac{1}{n-k}-1}\frac{(k+1)\int_{\Sigma_t} \sigma_{k+1}\ip{V}{\nu}\,d\mu}{\left(\int_{\Sigma_t}\sigma_{k-1}\,d\mu\right)^{\frac{1}{n-k+1}}}\\
    & \hskip 0.4cm -\frac{1}{n}\left(\int_{\Sigma_t} \sigma_k\,d\mu\right)^{\frac{1}{n-k}}\frac{k\int_{\Sigma_t} \sigma_{k}\ip{V}{\nu}\,d\mu}{\left(\int_{\Sigma_t}\sigma_{k-1}\,d\mu\right)^{\frac{1}{n-k+1}+1}}.
\end{align*}
Next we use the Hsiung-Minkowski's identities \cite{Hs} (see also  \cite{K}) for conformal Killing fields, which asserts that
\[ \int_{\Sigma} \ip{V}{\nu}\frac{\sigma_{k+1}}{{{n}\choose{k+1}}} \, d\mu + \int_{\Sigma} \frac{\div(V)}{n+1}\frac{\sigma_k}{{{n}\choose{k}}} \, d\mu = 0. \]
This shows

\begin{align*}
\frac{d}{dt} Q_k(t) &= -\frac{1}{n-k}\left(\int_{\Sigma_t} \sigma_k\,d\mu\right)^{\frac{1}{n-k}-1} \cdot \frac{\frac{k+1}{n+1}\cdot\frac{{{n}\choose{k+1}}}{{{n}\choose{k}}}\int_{\Sigma_t}\sigma_k\, \div(V)\,d\mu}{\left(\int_{\Sigma_t}\sigma_{k-1}\,d\mu\right)^{\frac{1}{n-k+1}}}\\
& \hskip 0.4cm +\frac{1}{n}\left(\int_{\Sigma_t} \sigma_k\,d\mu\right)^{\frac{1}{n-k}}\cdot\frac{\frac{k}{n+1}\frac{{{n}\choose{k}}}{{{n}\choose{k-1}}}\int_{\Sigma_t}\sigma_{k-1} \div(V)\,d\mu}{\left(\int_{\Sigma_t}\sigma_{k-1}\,d\mu\right)^{\frac{1}{n-k+1}+1}}\\
&= -\frac{Q_k(t)}{n+1}\left(\frac{\int_{\Sigma_t}\sigma_k\div(V)\,d\mu}{\int_{\Sigma_t}\sigma_k\,d\mu}-\frac{\int_{\Sigma_t}\sigma_{k-1} \div(V)\,d\mu}{\int_{\Sigma_t}\sigma_{k-1}\,d\mu} \right)\\
& = 0 & \text{ (by \eqref{eq:ConditionV})}
\end{align*}

From \cite{GL} the quantity $Q_k(t)$ is monotone decreasing along the flow $\partial_t F =-\frac{\sigma_k}{\sigma_{k-1}}\nu$, and $Q_k(t)$ is invariant under this flow if and only if $\Sigma_t$ is a round sphere. We conclude that $\Sigma$ is a round sphere. It completes the first part of the theorem.

The condition \eqref{eq:ConditionV} is clearly fulfilled when $\div(V)$ is a constant function. To see that \eqref{eq:ConditionV} is also fulfilled when \eqref{eq:CenterOfMass} holds, we recall from Proposition \ref{prop:alpha_affine} that $\div(V)$ is an affine linear function on $\R^{n+1}$. Write
\[\div(V) = A + \sum_{i = 1}^{n+1} B^i x_i.\]
Then, we have
\begin{align*}
\int_{\Sigma_t} \sigma_k \,\div(V)\,d\mu & = A\int_{\Sigma_t}\sigma_k\,d\mu + \sum_{i=1}^{n+1}B^i\int_{\Sigma_t}\sigma_k x_i\,d\mu\\
& = \left(A + \sum_{i=1}^{n+1}B^i\right)\int_{\Sigma_t}\sigma_k \,d\mu.
\end{align*}
Similarly, we have
\[\int_{\Sigma_t} \sigma_{k-1} \,\div(V)\,d\mu = \left(A + \sum_{i=1}^{n+1}B^i\right)\int_{\Sigma_t}\sigma_{k-1} \,d\mu.\]
It verifies the condition \eqref{eq:ConditionV}, and so by the first part of the theorem $\Sigma$ must be a round sphere.
\end{proof}

To conclude, we have demonstrated that round spheres are unique not only in the class of closed homothetic self-similar solutions of IMCF, but also in the class of closed self-conformal solutions for flows including IMCF. To the best of authors' knowledge, it is not known whether there are self-conformal solutions to IMCF which are not homothetic self-similar. For the compact case, Theorems \ref{thm:2D}, \ref{thm:StarShaped}, and \ref{thm:reflect} shows such an example must be in dimension three or higher, and it cannot be star-shaped or has constant $\div(V)$. For the non-compact case, the authors are not aware of any examples of self-conformal, but not homothetic self-similar, complete solutions to IMCF. It is an interesting problem to construct such a solution, or to show that it does not exist.

\appendix
\section{Inversion-Invariant Quantity and a Sharp Inequality}
The classical Liouville's Theorem for conformal maps (see e.g. \cite{Bl}) asserts that every conformal map of $\R^{n+1\geq 3}$ is a composition of similarities and an inversion. Inspired by this classification theorem and the proof of Theorem \ref{thm:reflect}, one may hope to find a conformal-invariant quantity that is monotone along, say, IMCF. The quantity
\[Q_1(\Sigma) = \frac{1}{|\Sigma|^{\frac{n-1}{n}}}\int_{\Sigma}Hd\mu\]
is only similarity-invariant. In view of Liouville's Theorem, it is natural to consider the modified $Q_1$ defined as
\[\overline{Q}(\Sigma) := Q_1(\Sigma) + Q_1(\widetilde{\Sigma}) = \frac{1}{|\Sigma|^{\frac{n-1}{n}}}\int_{\Sigma}Hd\mu + \frac{1}{|\widetilde{\Sigma}|^{\frac{n-1}{n}}}\int_{\widetilde{\Sigma}}\widetilde{H}d\widetilde{\mu}  \]
where $\widetilde{\Sigma}$ is the inversion of $\Sigma$ about the unit sphere centered at the origin.

It is clearly invariant under the inversion about unit sphere. One wishes that it is also monotone along IMCF, but it does not seem to be the case. However, we are able to prove a sharp geometric inequality concerning $\overline{Q}$ for star-shaped $\Sigma$.

\begin{theorem}
\label{thm:inequality}
Suppose $\Sigma^n \subset \R^{n+1 \geq 3}$ is a star-shaped closed hypersurface given as a radial graph of $f : \mathbb{S}^n \to \R_+$ over the unit sphere $\mathbb{S}^n$. Let $R:=\displaystyle\sup_{p\in\mathbb{S}^n}f(p)$ and $r:=\displaystyle\inf_{p\in\mathbb{S}^n}f(p)$. Then, we have 
\begin{align}
\label{eq:inequality}
    \left(\frac{r}{R}\right)^{\frac{3(n-1)}{2}}\frac{2n|\mathbb{S}^n|}{\left(|\Sigma|\cdot|\widetilde{\Sigma}|\right)^{\frac{n-1}{2n}}}
    %%%
    \leq \overline{Q}(\Sigma)
    %%%
    \leq\left(\frac{R}{r}\right)^{\frac{3(n-1)}{2}}\frac{2n|\mathbb{S}^n|}{\left(|\Sigma|\cdot|\widetilde{\Sigma}|\right)^{\frac{n-1}{2n}}}
\end{align}
where equalities hold if and only if $\Sigma$ is a round sphere.
\end{theorem}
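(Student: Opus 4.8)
The plan is to push every quantity down to an integral over the fixed round sphere $\mathbb{S}^n$ via the radial graph $f$, and to exploit the fact that inversion about the unit sphere acts on the data completely explicitly. Writing $X = f(p)p$ for the position vector, the inversion $X\mapsto X/|X|^2$ sends $\Sigma$ to the radial graph of $\widetilde f = 1/f$; hence $\widetilde R = \sup\widetilde f = 1/r$ and $\widetilde r = 1/R$, so that the ratio $R/r$ and the product $|\Sigma|\cdot|\widetilde\Sigma|$ are both invariant under inversion, exactly as the two sides of \eqref{eq:inequality} are. Since $\overline Q(\Sigma) = Q_1(\Sigma) + Q_1(\widetilde\Sigma)$ is symmetric under $\Sigma\leftrightarrow\widetilde\Sigma$, it suffices to prove two-sided bounds for the single similarity-invariant quantity $Q_1$ of an arbitrary radial graph, in terms of its own $r$, $R$, area, and $|\mathbb{S}^n|$, and then apply them to both $\Sigma$ and $\widetilde\Sigma$ and add.

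Two elementary ingredients drive the estimates. First, the induced area element of a radial graph is $d\mu = f^{n-1}\sqrt{f^2 + |\nabla f|^2}\,d\sigma$ (here $\nabla$ and $d\sigma$ are on $\mathbb{S}^n$); a direct computation with $\widetilde f = 1/f$ shows the corresponding density for $\widetilde\Sigma$ is $f^{-(n+1)}\sqrt{f^2+|\nabla f|^2}\,d\sigma$, so the two densities have pointwise ratio $f^{2n}$. From $r\le f\le R$ this gives $r^{2n}\le |\Sigma|/|\widetilde\Sigma|\le R^{2n}$, which is precisely what converts the factor $|\Sigma|^{(n-1)/n}$ in $Q_1(\Sigma)$ into the symmetric factor $(|\Sigma|\cdot|\widetilde\Sigma|)^{(n-1)/(2n)}$ at the cost of a power of $R/r$; dropping the gradient term also gives $|\Sigma|\ge\int_{\mathbb{S}^n}f^n\,d\sigma\ge r^n|\mathbb{S}^n|$. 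Second, for the lower bound I would use the first Hsiung--Minkowski identity $n|\Sigma| = \int_\Sigma H\langle X,\nu\rangle\,d\mu$ together with $\langle X,\nu\rangle\le|X|\le R$ (and mean-convexity, implicit for the flows in question) to get $\int_\Sigma H\,d\mu\ge n|\Sigma|/R$. Combining with the area bound yields $Q_1(\Sigma)\ge (r/R)\,n|\mathbb{S}^n|^{1/n}$, and the same bound for $\widetilde\Sigma$; a short comparison of powers (using $n\ge 2$ and $r\le R$) then shows this already dominates the left-hand side of \eqref{eq:inequality}.

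The substantive difficulty is the upper bound, namely an upper estimate for the total mean curvature $\int_\Sigma H\,d\mu$. This cannot be controlled by $r$ and $R$ alone: a surface with many thin radial spikes keeps $r,R$ fixed while making $\int_\Sigma H\,d\mu$ and $|\Sigma|$ large, so the area must genuinely enter. Clearing denominators in the desired inequality, the target takes the form
\[\int_\Sigma H\,d\mu\ \le\ \left(\tfrac{R}{r}\right)^{\frac{3(n-1)}{2}} n|\mathbb{S}^n|\left(\tfrac{|\Sigma|}{|\widetilde\Sigma|}\right)^{\frac{n-1}{2n}},\]
which is sharp on centered spheres. I would establish it by writing $H$ in divergence form for the radial graph and integrating by parts, so that the second derivatives of $f$ are eliminated and $\int_\Sigma H\,d\mu$ becomes an explicit integral over $\mathbb{S}^n$ in $f$ and $\nabla f$; the cancellations produced by this integration by parts are exactly what tame the spikes. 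The resulting integrand can then be bounded using $r\le f\le R$ and the same area densities, the gradient terms being handled by their sign. Arranging the estimate so that the constant comes out to be precisely $(R/r)^{3(n-1)/2}$, rather than a cruder power, is the main obstacle and the technical heart of the proof.

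Finally I would assemble the one-sided estimates for $Q_1(\Sigma)$ and $Q_1(\widetilde\Sigma)$, add them, and re-express $|\Sigma|^{(n-1)/n}$ and $|\widetilde\Sigma|^{(n-1)/n}$ through $(|\Sigma|\cdot|\widetilde\Sigma|)^{(n-1)/(2n)}$ using the area ratio, which produces the common prefactor $2n|\mathbb{S}^n|/(|\Sigma|\cdot|\widetilde\Sigma|)^{(n-1)/(2n)}$ and the matching powers of $R/r$. For the equality statement I would track each inequality used: the Minkowski step forces $\langle X,\nu\rangle\equiv R$, the area step forces $\nabla f\equiv 0$ and $f\equiv r$, and these can hold simultaneously only when $r = R$ with $f$ constant, i.e.\ when $\Sigma$ is a round sphere centered at the origin; conversely all inequalities become equalities there.
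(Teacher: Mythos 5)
Your reduction of \eqref{eq:inequality} to \emph{separate} two-sided bounds on $Q_1(\Sigma)$ and $Q_1(\widetilde{\Sigma})$ is where the argument breaks down, and the step you defer as ``the technical heart'' is not merely difficult --- it is false. After clearing denominators your proposed upper estimate reads $\int_\Sigma H\,d\mu \le (R/r)^{3(n-1)/2}\, n|\mathbb{S}^n|\,(|\Sigma|/|\widetilde{\Sigma}|)^{\frac{n-1}{2n}}$; since (as you observe) the two area densities have pointwise ratio $f^{2n}$, we have $|\Sigma|/|\widetilde{\Sigma}|\le R^{2n}$, so the right-hand side is bounded by a constant depending only on $n,r,R$. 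The left-hand side is not: your own spike mechanism (say $N$ thin radial fingers of width $\delta$ with $N\delta^{n-1}\to 0$ but $N\delta^{n-2}\to\infty$) drives $\int_\Sigma H\,d\mu$, and hence $Q_1(\Sigma)$, to infinity while $r$, $R$, $|\Sigma|$ and the ratio $|\Sigma|/|\widetilde{\Sigma}|$ stay controlled. No integration by parts can rescue a false inequality. The theorem holds only because the two summands of $\overline{Q}$ are coupled: the missing ingredient is the exact inversion formula
\[
\widetilde{H}=-f^2H+\frac{2nf}{\sqrt{1+\left|\nabla\log f\right|^2}},
\]
which shows that $\int_{\widetilde{\Sigma}}\widetilde{H}\,d\widetilde{\mu}=-\int_\Sigma H f^{-2(n-1)}\,d\mu+\int_{\mathbb{S}^n}2nf^{-(n-1)}\,d\mu_{\mathbb{S}^n}$. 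The paper estimates the \emph{sum}: applying $r\le f\le R$ to this expression for $Q_1(\widetilde{\Sigma})$, the negative total-mean-curvature term combines with the denominator $(\int_\Sigma f^{-2n}d\mu)^{\frac{n-1}{n}}$ to produce exactly $-Q_1(\Sigma)$, which cancels the $+Q_1(\Sigma)$ in $\overline{Q}$ and leaves the explicit bound $(R/r)^{n-1}R^{n-1}\cdot 2n|\mathbb{S}^n|/|\Sigma|^{\frac{n-1}{n}}$; exchanging the roles of $\Sigma$ and $\widetilde{\Sigma}$ and taking the geometric mean of the two bounds yields the symmetric right-hand side of \eqref{eq:inequality}. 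A spiky $\Sigma$ has $Q_1(\Sigma)$ huge and $Q_1(\widetilde{\Sigma})$ correspondingly very negative; this cancellation is precisely what your decoupled strategy discards.

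Two secondary points. Your lower bound via the Hsiung--Minkowski identity and $\langle X,\nu\rangle\le R$ requires $H\ge 0$, which is not among the hypotheses of Theorem \ref{thm:inequality}; the paper obtains the lower bound by the same symmetric inversion argument with $f\ge r$ in place of $f\le R$. Moreover, since you would be proving the claimed lower bound by passing through a strictly stronger intermediate inequality, your equality analysis would need extra care, whereas in the paper every inequality used comes from replacing $f$ by $r$ or $R$, so equality anywhere forces $f\equiv r\equiv R$, i.e.\ $\Sigma$ is a round sphere centered at the origin.
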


\begin{proof}
Denote the standard round metric on $\mathbb{S}^n$ by $g_{\mathbb{S}^n}:=\iota^*\delta$, where $\delta$ is the flat metric on $\R^{n+1}$ and $\iota:\mathbb{S}^n\to\R^{n+1}$ is the inclusion. Let $F_{\mathbb{S}^n}$ be a local parametrization of $\mathbb{S}^n$ with local coordinates $(x^i)$. The components of $g_{\mathbb{S}^n}$ with respect to the local coordinates $(x^i)$ on $\mathbb{S}^n$ are denoted by $\sigma_{ij}$, and let $\nabla$ be the Levi-Civita connection on $\left(\mathbb{S}^n,g_{\mathbb{S}^n}\right)$.

Then, the star-shaped hypersurface $\Sigma$ and its inversion $\widetilde{\Sigma}$ can be locally parameterized by 
\[F:=fF_{\mathbb{S}^n} \;\; \text{ and } \;\; \widetilde{F} = \frac{1}{f}F_{\mathbb{S}^n}\]
respectively.

Denote the geometric quantities of $\Sigma$ by $g_{ij}$, $h_{ij}$, etc., and those of $\widetilde{\Sigma}$ by $\widetilde{g}_{ij}$, $\widetilde{h}_{ij}$, etc. By direct computations, one can get:
  	\begin{align*}
        \frac{\partial F}{\partial x^i} & =f\left[\left(\nabla_i\log f\right)F_{\mathbb{S}^n}+\frac{\partial F_{\mathbb{S}^n}}{\partial x^i}\right],\\
        g_{ij} & =f^2\left[\sigma_{ij}+\left(\nabla_i\log f\right)\left(\nabla_j\log f\right)\right],\\
        g^{ij} & =\frac{1}{f^2}\left[\sigma^{ij}-\frac{\left(\nabla^i\log f\right)\left(\nabla^j\log f\right)}{1+\left|\nabla\log f\right|^2}\right],\\
        d\mu &=f^n\sqrt{1+\left|\nabla\log f\right|^2}d\mu_{\mathbb{S}^n},\\
        \nu & =\frac{1}{\sqrt{1+\left|\nabla\log f\right|^2}}\left[F_{\mathbb{S}^n}-\left(\nabla^k\log f\right)\frac{\partial F_{\mathbb{S}^n}}{\partial x^k}\right],\\
        h_{ij} & =\frac{f}{\sqrt{1+\left|\nabla\log f\right|^2}}\left[\sigma_{ij}+\left(\nabla_i\log f\right)\left(\nabla_j\log f\right)-\nabla_i\nabla_j\log f\right],\\
        H &=\frac{1}{f\sqrt{1+\left|\nabla\log f\right|^2}}\left[n-\Delta\log f+\frac{\left(\nabla^i\log f\right)\left(\nabla^j\log f\right)\left(\nabla_i\nabla_j\log f\right)}{1+\left|\nabla\log f\right|^2}\right].
    \end{align*}
By replacing $f$ by $\frac{1}{f}$, we obtain:
    \begin{align*}
        \frac{\partial\widetilde{F}}{\partial x^i} & =\frac{1}{f}\left[-\left(\nabla_i\log f\right)F_{\mathbb{S}^n}+\frac{\partial F_{\mathbb{S}^n}}{\partial x^i}\right],\\
        \widetilde{g}_{ij} & =\frac{1}{f^2}\left[\sigma_{ij}+\left(\nabla_i\log f\right)\left(\nabla_j\log f\right)\right],\\
        \widetilde{g}^{ij} & =f^2\left[\sigma^{ij}-\frac{\left(\nabla^i\log f\right)\left(\nabla^j\log f\right)}{1+\left|\nabla\log f\right|^2}\right],\\
        d\widetilde{\mu} & =\frac{\sqrt{1+\left|\nabla\log f\right|^2}}{f^n}d\mu_{\mathbb{S}^n},\\
        \widetilde{\nu} & =\frac{1}{\sqrt{1+\left|\nabla\log f\right|^2}}\left[F_{\mathbb{S}^n}+\left(\nabla^k\log f\right)\frac{\partial F_{\mathbb{S}^n}}{\partial x^k}\right],\\
        \widetilde{h}_{ij} & =\frac{1}{f\sqrt{1+\left|\nabla\log f\right|^2}}\left[\sigma_{ij}+\left(\nabla_i\log f\right)\left(\nabla_j\log f\right)+\nabla_i\nabla_j\log f\right],\\
        \widetilde{H} & =\frac{f}{\sqrt{1+\left|\nabla\log f\right|^2}}\left[n+\Delta\log f-\frac{\left(\nabla^i\log f\right)\left(\nabla^j\log f\right)\left(\nabla_i\nabla_j\log f\right)}{1+\left|\nabla\log f\right|^2}\right].
    \end{align*}
We can then easily derive the relation between the mean curvatures $H$ and $\widetilde{H}$:
\begin{equation}
\label{eq:mean curvatures of inversion}
    \widetilde{H}=-f^2H+\displaystyle\frac{2nf}{\sqrt{1+\left|\nabla\log f\right|^2}}
\end{equation}

By \eqref{eq:mean curvatures of inversion}, we have
\begin{align*}
    Q_1(\widetilde{\Sigma}) = \frac{1}{|\widetilde{\Sigma}|^{\frac{n-1}{n}}}\int_{\widetilde{\Sigma}}\widetilde{H}d\widetilde{\mu} = \frac{-\displaystyle\int_{\Sigma}\frac{H}{f^{2(n-1)}}d\mu+\displaystyle\int_{\mathbb{S}^n}\frac{2n}{f^{n-1}}d\mu_{\mathbb{S}^n}}{\left(\displaystyle\int_{\Sigma}\frac{1}{f^{2n}}d\mu\right)^{\frac{n-1}{n}}}
\end{align*}
Using $r\leq f\leq R$ on $\mathbb{S}^n$, we can estimate
\begin{align*}
    Q_1(\widetilde{\Sigma})&\leq\frac{-\displaystyle\frac{1}{R^{2(n-1)}}\int_{\Sigma}Hd\mu+\displaystyle\frac{1}{r^{n-1}}\int_{\mathbb{S}^n}2nd\mu_{\mathbb{S}^n}}{\left(\displaystyle\frac{1}{R^{2n}}\int_{\Sigma}d\mu\right)^{\frac{n-1}{n}}} \\
    %%%
    &=\frac{-\displaystyle\frac{1}{R^{2(n-1)}}\int_{\Sigma}Hd\mu+\displaystyle\frac{2n|\mathbb{S}^n|}{r^{n-1}}}{\displaystyle\frac{1}{R^{2(n-1)}}|\Sigma|^{\frac{n-1}{n}}} \\
    %%%
    &=-\frac{1}{|\Sigma|^{\frac{n-1}{n}}}\int_{\Sigma}Hd\mu+\left(\frac{R}{r}\right)^{n-1}R^{n-1}\cdot\frac{2n|\mathbb{S}^n|}{|\Sigma|^{\frac{n-1}{n}}} \\
    %%%
    \therefore\qquad Q_1(\Sigma)+Q_1(\widetilde{\Sigma})&\leq\left(\frac{R}{r}\right)^{n-1}R^{n-1}\cdot\frac{2n|\mathbb{S}^n|}{|\Sigma|^{\frac{n-1}{n}}}
\end{align*}
Next, note that the inversion of $\widetilde{\Sigma}$ is just $\Sigma$. Thus, using $\frac{1}{R}\leq\frac{1}{f}\leq\frac{1}{r}$, we have 
\begin{align*}
    Q_1(\widetilde{\Sigma})+Q_1(\Sigma) &\leq\left(\frac{1/r}{1/R}\right)^{n-1}\left(\frac{1}{r}\right)^{n-1}\cdot\frac{2n|\mathbb{S}^n|}{|\widetilde{\Sigma}|^{\frac{n-1}{n}}} \\
    %%%
    &=\left(\frac{R}{r}\right)^{n-1}\left(\frac{1}{r}\right)^{n-1}\cdot\frac{2n|\mathbb{S}^n|}{|\widetilde{\Sigma}|^{\frac{n-1}{n}}}
\end{align*}
Therefore, 
\begin{align*}
    \left(Q_1(\Sigma)+Q_1(\widetilde{\Sigma})\right)^2&\leq\left(\frac{R}{r}\right)^{n-1}R^{n-1}\cdot\frac{2n|\mathbb{S}^n|}{|\Sigma|^{\frac{n-1}{n}}}\cdot\left(\frac{R}{r}\right)^{n-1}\left(\frac{1}{r}\right)^{n-1}\cdot\frac{2n|\mathbb{S}^n|}{|\widetilde{\Sigma}|^{\frac{n-1}{n}}} \\ 
    %%%
    &=\left(\frac{R}{r}\right)^{3(n-1)}\frac{\left(2n|\mathbb{S}^n|\right)^2}{\left(|\Sigma|\cdot|\widetilde{\Sigma}|\right)^{\frac{n-1}{n}}}.
\end{align*}
Hence, we have proved one side of the inequality:
\begin{align*}
    Q_1(\Sigma)+Q_1(\widetilde{\Sigma})\leq\left(\frac{R}{r}\right)^{\frac{3(n-1)}{2}}\frac{2n|\mathbb{S}^n|}{\left(|\Sigma|\cdot|\widetilde{\Sigma}|\right)^{\frac{n-1}{2n}}}
\end{align*}
The other inequality is proved similarly. \bigskip 

Note that the only occasions where we have inequalities are when we estimate $f$ and $1/f$ using $r\leq f\leq R$. Thus, the inequalities above become equalities if and only if $r=f=R$ on $\mathbb{S}^n$; that is, $f$ is a constant, and this happens if and only if $\Sigma$ is a round sphere. 
\end{proof}

\begin{remark}
The equality case of Theorem \ref{thm:inequality} can give an alternate proof of a special case of Theorem \ref{thm:StarShaped} when the conformal Killing field $V$ corresponds to rescaling and inversion. First recall that the classical Minkowski's inequality (see e.g. the IMCF proof by Guan-Li \cite{GL}), which asserts that $Q_1(\Sigma), Q_1(\widetilde{\Sigma}) \geq n\abs{\mathbb{S}^n}$ and equality holds if and only if $\Sigma$ is a round sphere. By Gerhardt \cite{G} and Urbas \cite{U}, the flow $(\partial_t F)^\perp = -\frac{1}{\rho}{v}$ converges after rescaling to a sphere with a fixed radius (see e.g. \cite[P.313]{G}). Then, by $\frac{R}{r} \to 1$ as $t \to +\infty$ under the rescaled flow, the inequality \eqref{eq:inequality} becomes an equality as at time infinity:
\[2n\abs{\mathbb{S}^n} = \overline{Q}(\Sigma).\]
Note that the quantity $\overline{Q}(\Sigma)$ is unchanged if the flow evolves by rescaling and inversion. Therefore, at any time $Q_1(\Sigma)$ and $Q_1(\widetilde{\Sigma})$ both achieves the equality case of the Minkowski's inequality. Hence, it can also conclude that $\Sigma$ is a round sphere.
\end{remark}

\bibliographystyle{amsplain}
\bibliography{citations}

\end{document}